\newtheorem{thm}{Theorem}[section]
\newtheorem{thm*}{Theorem}
\newtheorem{lemma}[thm]{Lemma}
\newtheorem{cor}[thm]{Corollary}
\newtheorem{lemma*}[thm*]{Lemma}
\newtheorem{problem}[thm]{Problem}
\theoremstyle{definition}
\newtheorem{remark}[thm]{Remark}
\newtheorem{defn}[thm]{Definition}
\DeclareMathOperator{\inter}{int}
\DeclareMathOperator{\cl}{cl}
\DeclareMathOperator{\diam}{diam}
\newcommand{\from}{\colon}
\newcommand{\bfSigma}{\mathbf{\Sigma}}
\newcommand{\boolH}{\mathcal{B}} 
\newcommand{\N}{\mathbb{N}}
\newcommand{\R}{\mathbb{R}}
\newcommand{\Z}{\mathbb{Z}}
\newcommand{\T}{\mathbb{T}}
\renewcommand{\subset}{\subseteq}
\newcommand{\boundary}{\partial}
\newcommand{\boundaryinfty}{\partial_{\text{vis}(\infty)}}
\newcommand{\union}{\cup}
\newcommand{\bigunion}{\bigcup}
\newcommand{\inters}{\cap}
\newcommand{\define}[1]{\emph{#1}} 
\newcommand{\actson}{\curvearrowright}
\newcommand{\norm}[1]{\lvert {#1} \rvert}
\newcommand{\bfu}{\mathbf{u}}
\begin{document}

\title{Borel Circle Squaring}

\thanks{The first author was partially supported by NSF grant 
DMS-1500974.}

\author{Andrew S. Marks}
\address{Department of Mathematics, University of California at Los Angeles}
\email{marks@math.ucla.edu}

\author{Spencer T. Unger}
\address{Department of Mathematics, University of California at Los Angeles}
\email{sunger@math.ucla.edu}

\date{\today}

\begin{abstract}
  We give a completely constructive solution to Tarski's circle squaring
  problem. More generally, we prove a Borel version of an 
  equidecomposition theorem due to Laczkovich. If $k \geq 1$ and $A, B
  \subset \R^k$ are bounded Borel sets with the same positive Lebesgue measure whose
  boundaries have upper Minkowski dimension less than
  $k$, then $A$ and $B$ are equidecomposable by translations using Borel
  pieces. This answers a
  question of Wagon. Our proof uses  ideas from the study of flows in
  graphs, and a recent result of Gao, Jackson, Krohne, and Seward on
  special types of witnesses to the hyperfiniteness of free Borel actions
  of $\Z^d$. 
\end{abstract}

\maketitle

\section{Introduction}

In 1925, Tarski posed the problem of whether a disk and square of the same
area in the plane are equidecomposable by isometries \cite{T}. That is, can
a disk be partitioned into finitely many pieces which can be rearranged by
isometries to partition a square of the same area? This problem became
known as Tarski's circle squaring problem. In contrast to the Banach-Tarski
paradox in $\R^3$, a theorem of Tarski (see \cite{W}) implies that any two
Lebesgue measurable sets in $\R^2$ that are equidecomposable by isometries
must have the same Lebesgue measure, even when the pieces used in the
equidecomposition are allowed to be nonmeasurable. 
Thus, the requirement that the circle and the square have the same area is
necessary.

The idea of comparing the measure of sets by partitioning them into
congruent pieces has a long history, dating back in some form to Euclid.
The well known Wallace-Bolyai-Gerwien theorem states that two polygons in
$\R^2$ have the same area if and only if they are \define{dissection
congruent}, that is, equidecomposable by polygonal pieces where we may
ignore boundaries. Hilbert's third problem asked whether any two polyhedra
of the same volume are dissection congruent. Dehn famously gave a negative
answer to this problem. 

Early work on Tarski's circle squaring problem established the
nonexistence of certain types of equidecompositions. 
Dubins, Hirsch, and Karush
\cite{DHK} introduced the notion of \define{scissors congruence} in $\R^2$,
considering equidecompositions using pieces whose boundaries consist of a
single Jordan curve. They showed that a disk in $\R^2$ is scissors
congruent to no convex set other than translates of itself. So Tarski's
circle squaring problem has a negative answer for this restrictive
type of equidecomposition. Gardner~\cite{G} also showed that Tarski's circle
squaring problem cannot be solved using any locally discrete subgroup of
isometries. 

Laczkovich answered Tarski's question positively in 1990 \cite{L90}, using
only translations 
in his equidecomposition. 
In 1992, he 
improved this result to give a very general sufficient condition for when two
bounded sets in $\R^k$ of the same Lebesgue measure are equidecomposable by
translations. If $X \subset \R^k$, then we let $\boundary X =  \cl(X)
\setminus \inter(X)$ indicate its boundary and $\Delta(X)$ be the
\define{upper Minkowski dimension} of $X$ (see \cite{L92}). Let
$\lambda$ be Lebesgue measure. 

\begin{thm}[{\cite[Theorem 3]{L92}}]\label{L_thm}
  Suppose $k \geq 1$ and suppose $A, B \subset \R^k$ are bounded sets such
  that $\lambda(A) = \lambda(B) > 0$,  $\Delta(\boundary A) < k$, and
  $\Delta(\boundary B) <
  k$. Then $A$ and $B$ are equidecomposable by translations.
\end{thm}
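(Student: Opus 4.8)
The plan is to reduce the problem to decomposing an arbitrary set into a cube, and then to settle that case by a discrepancy estimate for a carefully chosen action of $\Z^d$ by translations. Write $\sim$ for equidecomposability by translations. First I would note that $\sim$ is an equivalence relation: reflexivity and symmetry are immediate, and transitivity follows by passing to a common refinement of the two given partitions. Hence it suffices to show that every bounded $A \subset \R^k$ with $\lambda(A) > 0$ and $\Delta(\partial A) < k$ satisfies $A \sim Q_A$, where $Q_A := [0, \lambda(A)^{1/k})^k$ is the half-open cube of volume $\lambda(A)$. Indeed, since $\Delta(\partial Q_A) = k - 1 < k$, the same statement applies to $B$, and $Q_A = Q_B$ because $\lambda(A) = \lambda(B)$; chaining $A \sim Q_A = Q_B \sim B$ proves the theorem.

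So fix such an $A$ and set $Q := Q_A$. I would choose an integer $d$ --- large, depending only on $k$ and on the gap $k - \Delta(\partial A)$ --- and vectors $u_1, \dots, u_d \in \R^k$, and for $n = (n_1, \dots, n_d) \in \Z^d$ write $n \cdot u := n_1 u_1 + \cdots + n_d u_d$. For a bounded set $S \subset \R^k$, a point $x \in \R^k$, and $N \in \N$, let
\begin{equation*}
  D_N^S(x) \;=\; \sum_{n \in \{0, \dots, N-1\}^d} \chi_S(x + n \cdot u).
\end{equation*}
The crux is to choose the $u_j$ so that the discrepancy between $A$ and the cube is \emph{uniformly bounded}:
\begin{equation*}
  \sup_{N \in \N,\ x \in \R^k}\ \bigl\lvert D_N^A(x) - D_N^Q(x) \bigr\rvert \;<\; \infty. \tag{$\ast$}
\end{equation*}
To prove $(\ast)$ I would put $f := \chi_A - \chi_Q$, which is bounded, compactly supported, and has $\int_{\R^k} f = \lambda(A) - \lambda(Q) = 0$. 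Then $D_N^A(x) - D_N^Q(x) = \sum_{n} f(x + n \cdot u)$; expanding $f$ by Fourier inversion and summing the geometric series in each coordinate $n_j$ rewrites this as an integral of $\hat f(\xi)$ against a product of $d$ one-dimensional Dirichlet kernels, of modulus at most a constant times $\prod_{j=1}^d \min\!\bigl(N,\, \lVert \xi \cdot u_j \rVert^{-1}\bigr)$, where $\lVert t \rVert$ denotes the distance from $t$ to the nearest integer. The hypothesis $\Delta(\partial A) < k$ (and its trivial analogue for the cube) yields power decay of $\hat f$ with an exponent governed by $k - \Delta(\partial A)$, while insisting that the $u_j$ be suitably Diophantine --- \emph{badly approximable}, a property of Lebesgue-almost every tuple $(u_1, \dots, u_d) \in (\R^k)^d$ once $d$ is large enough --- keeps $\lVert \xi \cdot u_j \rVert$ from being anomalously small on too large a set of frequencies. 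Splitting the integral into the contributions of small $\xi$ (handled via $\hat f(0) = 0$ and the smoothness of $\hat f$) and of large $\xi$ (where the two decay inputs cooperate) produces a bound independent of $N$ and $x$, which is $(\ast)$. As the set of ``good'' tuples is conull, a single tuple can be chosen that works for $A$ and for $Q$ at once.

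It remains to convert $(\ast)$ into an equidecomposition. Let $M$ be the supremum in $(\ast)$, and for $u_j$ in general position let $\Gamma = \Z u_1 + \cdots + \Z u_d \subset \R^k$ act freely on $\R^k$ by translation, identifying each orbit with $\Z^d$. On any orbit, $(\ast)$ says the integer-valued function $\chi_A - \chi_Q$ has all of its partial sums over axis-parallel boxes bounded by $M$; a combinatorial lemma on $\Z^d$ then shows such a function is a \emph{bounded coboundary}, i.e.\ $\chi_A - \chi_Q = \sum_{i=1}^d (g_i \circ T_i - g_i)$ for integer-valued $g_i$ bounded in terms of $M$ and $d$ alone, $T_i$ being the shift by $e_i$ --- equivalently, there is a bounded integer flow on the Cayley graph of $\Z^d$ with divergence $\chi_A - \chi_Q$. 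Feeding this flow into a Hall-type marriage argument --- boundedness of the flow rendering the relevant bipartite graph locally finite and simultaneously verifying the marriage condition --- yields a bijection $\varphi : A \to Q$ such that $\varphi(a) - a$ takes only finitely many values, all lying in $\Gamma$. Partitioning $A$ according to the value of $\varphi(a) - a$ exhibits $A \sim Q$.

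The hard part will be $(\ast)$: extracting a bound uniform in both the scale $N$ and the basepoint $x$ from nothing more than the strict inequality $\Delta(\partial A) < k$. This is precisely where the Minkowski-dimension hypothesis is indispensable (it controls the Fourier tail of $\chi_A$), where the number $d$ of translation vectors must be taken large --- the larger, the closer $\Delta(\partial A)$ is to $k$ --- and where the metric theory of Diophantine approximation is invoked to ensure that almost every $u$ is sufficiently generic. The descent from $(\ast)$ to a finite-piece equidecomposition, by contrast, is the softer, combinatorial part, though even there one must be careful to keep the number of translations finite.
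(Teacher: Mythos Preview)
The paper does not prove this statement itself --- it is quoted as Laczkovich's result from \cite{L92}, and the paper's own contribution is the stronger Borel version, Theorem~\ref{main_thm}. So there is no proof in this paper to compare against line by line, though the paper does isolate Laczkovich's discrepancy estimate as Lemma~\ref{L_discrep} and treats it as a black box. Your overall architecture --- pass to an action of $\Z^d$ by translations, establish a uniform discrepancy bound, and then descend to a bounded-displacement bijection via a Hall-type marriage argument --- is indeed Laczkovich's, and the reduction to a cube together with the final combinatorial step are fine at this level of detail.

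Your account of how $(\ast)$ is obtained, however, contains two genuine errors. First, ``badly approximable'' tuples form a Lebesgue-\emph{null} set, not a conull one; what holds for almost every $\bfu$ is a weaker Diophantine inequality of the kind supplied by Schmidt's metric theorem \cite{S}, and that is what Laczkovich invokes. Second, and more substantively, the hypothesis $\Delta(\partial A) < k$ does \emph{not} give pointwise power decay of $\hat f$: small upper Minkowski dimension of $\partial A$ exerts no direct control on $\widehat{\chi_A}(\xi)$ for individual frequencies $\xi$, and there is no general inequality of the form $|\widehat{\chi_A}(\xi)| \lesssim |\xi|^{-(k-\Delta(\partial A))}$. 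Laczkovich's argument does not pass through $\hat f$ at all. The dimension hypothesis enters instead through its box-counting meaning, namely $\lambda\bigl(\{x : d(x,\partial A) < \delta\}\bigr) = O(\delta^{\,k-\Delta(\partial A)})$, and this --- after transferring the problem to the torus $\T^k$ rather than working in $\R^k$ --- is combined with the Niederreiter--Wills and Schmidt equidistribution results \cite{NW,S} to bound the discrepancy directly. As written, your Fourier-kernel route would not close; the actual mechanism is geometric (counting how much of the orbit lands near $\partial A$) rather than harmonic-analytic.
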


Laczkovich's proofs in \cite{L90} and \cite{L92} are nonconstructive and use the axiom of choice.
It remained an open problem whether such equidecompositions could be
done constructively. 
In \cite[Appendix C, Question 2.a]{W}, Wagon made this question
precise by asking whether 
Tarski's circle squaring problem could be solved using
Borel pieces. Recall that the Borel sets are the smallest collection of
sets obtained by starting with the open sets and 
closing under the operations of countable union,
countable intersection, and complementation. 

In a recent breakthrough,
Grabowski, M\'ath\'e, and Pikhurko \cite{GMP} showed that Tarski's circle
squaring problem can be solved using 
Lebesgue measurable or Baire measurable pieces, by proving a version
of Theorem~\ref{L_thm} for Lebesgue measurable/Baire measurable equidecompositions.
Their proof is also non-constructive since it uses the axiom of choice to
construct the equidecomposition on a null/meager set. However, their result
gave strong evidence that a constructive solution to Tarski's circle
squaring problem might exist, since it showed that there cannot be any measure-theoretic
or Baire category obstruction. 

In this paper, we answer Wagon's question~\cite[Appendix C, Question
2.a]{W} and give a completely constructive solution to Tarski's circle
squaring problem. More generally, we prove a Borel version of Laczkovich's
Theorem~\ref{L_thm}. This generalizes the
results of \cite{GMP}. It also provides a ``Borel solution'' to Hilbert's third
problem: any two bounded Borel sets in $\R^k$ with ``small boundary'' have
the same measure if and only if they are translation
equidecomposable using Borel pieces: 

\begin{thm}\label{main_thm}
  Suppose $k \geq 1$ and suppose $A, B \subset \R^k$ are bounded Borel sets such
  that $\lambda(A) = \lambda(B) > 0$, $\Delta(\boundary A) < k$, and
  $\Delta(\boundary B) <
  k$. Then $A$ and $B$ are equidecomposable by translations using Borel pieces.
\end{thm}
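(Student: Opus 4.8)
The plan is to convert the equidecomposition problem into a question about \emph{Borel flows} on a graphing of a free $\Z^d$-action, and then to answer that question using the special hyperfiniteness witnesses of Gao, Jackson, Krohne, and Seward.

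First I would carry out the combinatorial reduction. Following Laczkovich's number-theoretic analysis behind Theorem~\ref{L_thm} (the same input used in \cite{GMP}), choose an integer $d \geq 1$ and translation vectors $u_1, \dots, u_d \in \R^k$ which, together with the standard basis of $\R^k$, are linearly independent over $\mathbb{Q}$; rational independence ensures that the $\Z^d$-action $\bfN$ on $\R^k$ generated by the translations $T_i \from x \mapsto x + u_i$ is free, so the induced Borel graph $G$ on $\R^k$ (with $x$ adjacent to each $x \pm u_i$) restricts on every orbit to a copy of the Cayley graph of $\Z^d$. The hypotheses $\Delta(\boundary A) < k$, $\Delta(\boundary B) < k$, and $\lambda(A) = \lambda(B)$ are exactly what is needed for a discrepancy estimate: the $u_i$ can be chosen so that there is a constant $C$ with $\bigl| \sum_{v} \rho(x + v\cdot u) \bigr| \leq C$ for every half-open combinatorial box of lattice points $v$ inside an orbit, where $\rho := 1_A - 1_B$. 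This bound says precisely that, on each orbit, $\rho$ is the divergence of a real-valued flow of bounded value on the Cayley graph of $\Z^d$. Granting this, it suffices to produce a \emph{Borel} flow $f$ on $G$ that is of bounded value, \emph{integer}-valued, and has divergence $\rho$: a transport argument then reads off from $f$ a Borel bijection $A \to B$ that is piecewise a translation by a $\Z$-combination of the $u_i$, i.e.\ a Borel translation-equidecomposition, the bounded box-sums keeping the associated combinatorics under control.

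The substance is producing this Borel integer flow. Obtaining a Borel \emph{real} flow of bounded value with divergence $\rho$ is comparatively soft: on each orbit the admissible flows form a nonempty compact convex subset of a product of bounded intervals, and one can make a Borel choice, e.g.\ by a Borel uniformization or by taking limits of finite-window optimal flows. The hard part will be rounding this real flow to an integer flow \emph{Borel-ly and with uniformly bounded values}. Here I would invoke the result of Gao, Jackson, Krohne, and Seward on special witnesses to the hyperfiniteness of free Borel $\Z^d$-actions: the orbits admit a Borel hierarchy of rectangular tiles of uniformly controlled side-lengths with the tile boundaries at successive levels well separated. Using such a hierarchy, correct the real flow to an integer flow by induction on the levels --- inside each lowest-level tile, round the flow values and absorb the (uniformly bounded) rounding error into a correcting flow supported near the tile boundary; at each higher level, the errors inherited along the boundaries of the new tiles are rerouted by a flow internal to that tile, whose existence and bounded size follow from a finite max-flow/min-cut computation together with the discrepancy bound applied to the tile. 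Because the tile geometry and the sizes of the accumulated errors remain uniformly bounded as one ascends the hierarchy, the limiting flow is Borel, integer-valued, of bounded value, and has divergence $\rho$.

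The crux, and where I expect most of the effort to go, is exactly this last construction: performing the rounding and error correction of the flow in a Borel way while keeping every value uniformly bounded. In the measurable or Baire-category setting of \cite{GMP} one may discard a null or meager set, which removes essentially all of this difficulty; in the Borel category there is no such slack, and plain hyperfiniteness of $\Z^d$-actions does not suffice --- one genuinely needs the finer structure of the Gao--Jackson--Krohne--Seward witnesses. The delicate point is to match the tile hierarchy to the flow correction so that the errors absorbed at each level can always be re-routed at the next without the flow values diverging over the infinitely many levels, which is precisely what forces the use of the sharp control on tile geometry and on the separation between levels.
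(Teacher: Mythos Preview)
Your overall architecture matches the paper's: reduce to a free $\Z^d$-action, build a bounded Borel real-valued $f$-flow for $f = \chi_A - \chi_B$ from Laczkovich's discrepancy estimates, round it to an integral Borel flow using the Gao--Jackson--Krohne--Seward input, and read off the equidecomposition. Two implementation points, however, differ substantively from what the paper does, and in each case the paper's route is more direct than what you sketch.

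First, for the real Borel flow you appeal to compactness and a Borel selection or to ``limits of finite-window optimal flows.'' The difficulty with a selection argument is exactly that there is no Borel choice of basepoint per orbit, so it is not clear how to make the selected flow orbit-invariant. The paper instead gives an \emph{explicit} formula (Section~\ref{flow_section}): on each orbit one writes down, for every coset $h \in \Z^d/(2^n\Z)^d$, a partial flow $\phi_{x,h}$ and then \emph{averages} over all $h$ at each scale. The averaging is what makes the result independent of the basepoint $x$ and hence automatically Borel, and Laczkovich's estimate is exactly what makes the series converge with a uniform bound. No selection theorem or optimization is used.

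Second, and more significantly, your rounding step envisions an induction up an infinite hierarchy of tiles, and you correctly flag the worry that the corrections might accumulate unboundedly across levels. The paper sidesteps this entirely. The Gao--Jackson--Krohne--Seward theorem (Theorem~\ref{cake}) is used not as a hierarchy but as a \emph{single} Borel cover $C$ of $X$ by finite connected sets whose boundaries are pairwise far apart. On the edges in $\bigcup_{F\in C}\partial F$ the flow is made integral in one pass, by pushing the fractional parts around $3$-cycles along an Euler tour of the boundary graph (Lemma~\ref{euler_lemma}); this changes each edge by at most $3^d-1$. Removing those boundary edges disconnects $G_a$ into finite pieces, and on each piece one applies the finite integral flow theorem (Corollary~\ref{bounded_int_flow}) once. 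There is no induction over infinitely many levels, and so no convergence issue to control. (This is also why the paper needs $d \geq 2$, not $d \geq 1$: the Euler-cycle argument on the boundary requires it.) The final conversion of the integral flow to a bijection uses a single Gao--Jackson rectangular tiling (Lemma~\ref{rect_tiling}) with side length large enough that the flow across each tile boundary is dominated by $|A\cap R|$ and $|B\cap R|$.
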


The pieces that we use in our equidecomposition are quite simple. Let
$\bfSigma^{A,B}_{1}$ be the collection of all open balls in $\R^k$,
translates of $A$, and
translates of $B$. Then inductively, let $\boolH^{A,B}_n$ be all finite
Boolean combinations of $\bfSigma^{A,B}_n$ sets, and let $\bfSigma^{A,B}_{n+1}$ be
all countable unions of sets in $\boolH^{A,B}_n$. If $A$ and $B$ are
$\bfSigma^0_m$ in the usual Borel hierarchy, then clearly
every set in $\bfSigma^{A,B}_n$ is $\bfSigma^{0}_{n+m-1}$.
The pieces we use in our
equidecomposition are sets in $\boolH^{A,B}_4$ (see
Section~\ref{complexity_section}). If $A$ and $B$ are a disk and
a square of the same area in $\R^2$, then it is easy to see that $A$ and
$B$ are not equidecomposable using set in $\boolH^{A,B}_1$, since $A$ and $B$
are not scissors congruent. 

A key idea in our proof is to use flows in infinite graphs as an
intermediate step towards constructing equidecompositions.
Under the hypotheses of
Theorem~\ref{main_thm}, in Section~\ref{flow_section}
we give an explicit and simple construction of a
bounded ``Borel flow'' between $A$ and $B$.  Laczkovich's discrepancy
estimates from \cite{L92} -- the central ingredient in the proof of
Theorem~\ref{L_thm} -- are used to show the convergence of this
construction.  

Another important tool in our proof comes from the theory of orbit
equivalence and Borel equivalence relations. In particular, we use a
result of Gao, Jackson, Krohne, and Seward, about special types of witnesses to the hyperfiniteness of free
Borel actions of $\Z^d$ (see Theorem~\ref{cake}). 
Their theorem is part of an ongoing research program to understand the
complexity of actions of amenable groups in descriptive set theory and
ergodic theory. It builds on the result due to Weiss that every free Borel
action of $\Z^d$ is hyperfinite, and more recent work of Gao-Jackson
\cite{GJ} (see also \cite{SS}). 
Gao, Jackson, Krohne, and Seward's theorem is announced in \cite{GJKS}, but has not yet appeared, and so we 
include a proof of their result in 
Appendix~\ref{cake_appendix} for completeness. (This proof is different
from their forthcoming proof). So our paper is essentially self-contained
except for our use of Laczkovich's discrepancy estimates.

In Section~\ref{integral_section} we use this hyperfiniteness witness to turn
our real-valued Borel flow between $A$ and $B$ into an integer-valued flow.
This step in our proof also relies on the integral flow theorem which is a
corollary of the Ford-Fulkerson proof of the max-flow min-cut theorem.
The last step in our proof in Section~\ref{main_proof_section} uses this  
integer valued flow to define a Borel
equidecomposition from $A$ to $B$. 

These ideas are very different from the
work of \cite{GMP}. The tools they use are quite specific to the
measurable and Baire measurable settings and cannot easily be adapted to
prove Theorem~\ref{main_thm}.

The authors would like to thank Anton Bernshteyn, Clinton Conley, Steve
Jackson, Alekos Kechris, Igor Pak, Robin Tucker-Drob, and Brandon Seward for
helpful discussions.

\section{Preliminaries}
\label{sec:defn}

If $a \from \Gamma \actson X$ is an action of a group $\Gamma$ on a set
$X$, then $A, B \subset X$ are said to be \define{$a$-equidecomposable} if
there exist a partition $\{A_1, \ldots, A_n\}$ of $A$ and group elements
$\gamma_1, \ldots, \gamma_n \in \Gamma$ such that $\gamma_1 \cdot A_1,
\ldots, \gamma_n \cdot A_n$ is a partition of $B$. Similarly we say that
$A, B \subset X$ are \define{$a$-equidecomposable using Borel pieces} if
there exist a partition $\{A_1, \ldots, A_n\}$ of $A$ into Borel sets and group
elements $\gamma_1, \ldots, \gamma_n \in \Gamma$ such that $\gamma_1 \cdot
A_1, \dots, \gamma_n A_n$ is a partition of $B$ into Borel pieces. 

Suppose $A, B \subset \R^k$ are bounded, and we wish to show that $A$ and
$B$ are equidecomposable by translations using Borel pieces. By scaling and
translating $A$ and $B$, we may assume that $A, B \subset [0,1/2)^k$ which
is a subset of the $k$-torus $\T^k = \R^k/\Z^k$ which we identify with
$[0,1)^k$. Then it is clear that any equidecomposition by translations
between $A$ and $B$ in $\T^k$ can also be done in $\R^k$ using the same set
of pieces. This idea was used by Laczkovich~\cite{L90}. We will work in
$\T^k$ throughout the paper and show that $A$ and $B$ are equidecomposable
by translations using Borel pieces in $\T^k$. 

$\T^k$ inherits both its topology and abelian group structure from $\R^k$. We
let $\lambda$ be Haar measure on $\R^k/\Z^k$ which we can identify with Lebesgue
measure on the fundamental domain $[0,1)^k$.  If $F \subset \T^k$ is finite and
$A \subset \T^k$ is $\lambda$-measurable, then the \define{discrepancy of $F$
relative to $A$} is \[D(F,A) = \left| |F \inters A|/|F| - \lambda(A) \right|.\]

Given $\bfu = (u_1, \ldots, u_d) \in (\T^k)^d$, let $a_{\bfu}$ be the
action of $\Z^d$ on $\T^k$ defined by 
\[(n_1, \ldots, n_d) \cdot_{a_\bfu} x = n_1 u_1 +
\ldots + n_d u_d + x\] 
for $(n_1, \ldots, n_d) \in \Z^d$ and $x \in \T^k$. Let 
\[R_{N} = \{(n_1, \ldots, n_d) \in \Z^d : 0 \leq n_i
< N \text{ for every $i \leq d$}\}\]
and let the image of this set under the action $a_\bfu$ be
\[F_{N}(x, a_\bfu) = R_{N} \cdot_{a_{\bfu}} x.\]

Laczkovich proved the following crucial estimate on the discrepancies of
these sets, using ideas from Diophantine approximation, and building on
work of Schmidt \cite{S} and Niederreiter and Wills \cite{NW}.
\begin{lemma}[Laczkovich {\cite[Proof of Theorem 3]{L92}}, see also
{\cite[Lemma 6]{GMP}}]\label{L_discrep}
  Suppose $A \subset \T^k$ is measurable, $\Delta(\boundary A) < k$ and
  $\lambda(A) > 0$. Let $d$ be such that $d > 2k /(k - \Delta(\boundary
  A))$. Then for almost every 
  $\bfu \in (\T^k)^d$, there is an $\epsilon > 0$ and $M > 0$ such
  that for every $x \in \T^k$ and $N > 0$,
  \[D(F_N(x,a_\bfu),A) \leq M N^{-1 - \epsilon}.\]
\end{lemma}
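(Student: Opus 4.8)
The plan is to prove this as a statement in the metric theory of uniform distribution mod one, in which $A$ enters only through the ``quantitative Jordan measurability'' guaranteed by $\Delta(\boundary A) < k$: for every $\tau > 0$ the $\epsilon_0$-neighborhood of $\boundary A$ has measure $O_\tau\big(\epsilon_0^{\,k - \Delta(\boundary A) - \tau}\big)$. Write $s = \Delta(\boundary A)$ and $\gamma = k - s > 0$. For $\bfu$ outside a Lebesgue-null set the homomorphism $\Z^d \to \T^k$, $(n_1, \dots, n_d) \mapsto \sum_i n_i u_i$, is injective, so $|F_N(x, a_\bfu)| = N^d$ for all $x$ and $N$; I restrict to such $\bfu$ and estimate the discrepancy of this $N^d$-point set. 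The starting observation is that the relevant exponential sums factor: for $h \in \Z^k$,
\[
\frac{1}{N^d}\sum_{p \in F_N(x, a_\bfu)} e(h \cdot p) \;=\; e(h\cdot x)\,\prod_{i=1}^{d}\frac{1}{N}\sum_{n=0}^{N-1}e\big(n\, h \cdot u_i\big),
\]
and since $\big|\tfrac1N\sum_{n<N} e(n\theta)\big| \le \min\big(1, (2N\|\theta\|)^{-1}\big)$ (with $\|\cdot\|$ the distance to $\Z$), the modulus of the left side is $\prod_i \min\big(1, (2N\|h\cdot u_i\|)^{-1}\big)$, in particular independent of $x$ --- so all the bounds below will be uniform in $x$.

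The heart of the matter is the metric Diophantine estimate. For fixed $h \ne 0$ the numbers $\|h\cdot u_1\|, \dots, \|h\cdot u_d\|$ are, under Haar measure on $(\T^k)^d$, independent and uniformly distributed on $[0, 1/2)$ (each map $u_i \mapsto h\cdot u_i$ pushes Haar measure forward to Haar measure on $\T$), so $\mathbb{E}\big[\prod_i\min(1,(2N\|h\cdot u_i\|)^{-1})\big] = O\big((N^{-1}\log N)^d\big)$, whence $\mathbb{E}\big[\sum_{0 < \|h\|_\infty \le H}\tfrac{1}{r(h)}\prod_i\min(1,(2N\|h\cdot u_i\|)^{-1})\big] = O\big(N^{-d}(\log N)^{O_{d,k}(1)}\big)$, where $r(h) = \prod_j\max(1,|h_j|)$. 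Combining this with a higher-moment (or G\'al--Koksma type) estimate to get summable tails, Borel--Cantelli along a lacunary sequence of $N$, and monotonicity, one obtains: for a.e.\ $\bfu$ and every $\tau > 0$ there is $C(\bfu,\tau)$ with
\[
\sum_{0 < \|h\|_\infty \le H}\frac{1}{r(h)}\prod_{i=1}^d\min\!\Big(1,\frac{1}{2N\|h\cdot u_i\|}\Big) \;\le\; C(\bfu,\tau)\,N^{-d+\tau}
\]
for all $N \ge 1$ and all $H$ up to any fixed power of $N$. The one real difficulty here is that an individual $\|h\cdot u_i\|$ can be arbitrarily small; this is tamed because the $h$ for which \emph{all} of $\|h\cdot u_1\|, \dots, \|h\cdot u_d\|$ are small are sparse, which rests on the a.e.\ bound $\prod_i\|h\cdot u_i\| \ge c(\bfu,\tau)\|h\|_\infty^{-k-\tau}$ for all $h \ne 0$, itself a Borel--Cantelli consequence of $\mathrm{Leb}\{\bfu : \prod_i\|h\cdot u_i\| < t\} \lesssim_d t\,|\log t|^{d-1}$.

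To pass from boxes back to $A$, use $\Delta(\boundary A) = s$: a Whitney-type decomposition of $\inter(A)$, truncated at scale $\epsilon_0 = 1/H$, produces a finite union of axis-parallel boxes $B$ (of various scales) with $O(H^{s+\tau})$ pieces and $\lambda(A \triangle B) = O(H^{-\gamma+\tau})$, and likewise a box-union $C \supseteq A \triangle B$ with $O(H^{s+\tau})$ pieces and $\lambda(C) = O(H^{-\gamma+\tau})$. Then $D(F_N(x,a_\bfu), A) \le D(F_N, B) + D(F_N, C) + 2\lambda(C)$, and the Erd\H{o}s--Tur\'an--Koksma inequality bounds $D(F_N, \cdot)$ of a union of $m$ boxes by $2^k m\big(\tfrac{1}{H'} + \sum_{0 < \|h\|_\infty \le H'}\tfrac{1}{r(h)}\,|\text{exp.\ sum}|\big)$ for any cutoff $H'$. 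Taking $H'$ a large fixed power of $N$ and feeding in the metric estimate gives, with $m = O(H^{s+\tau})$ and $H = N^{\beta}$,
\[
D\big(F_N(x,a_\bfu), A\big) \;=\; O_{\bfu,\tau}\big(N^{\beta(s+\tau) - d + \tau} \;+\; N^{-\beta(\gamma - \tau)}\big).
\]
Both exponents fall below $-1$ once $\tfrac{1}{\gamma} < \beta < \tfrac{d-1}{s}$ and $\tau$ is small; this interval is nonempty provided $d$ exceeds a fixed multiple of $k/(k - \Delta(\boundary A))$, and the stated hypothesis $d > 2k/(k-\Delta(\boundary A))$ certainly ensures it (Laczkovich's discrepancy bounds in \cite{L92}, building on \cite{S} and \cite{NW}, yield precisely this clean form). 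Fixing such $\beta$, both terms are $O(N^{-1-2\epsilon})$ for some $\epsilon > 0$; absorbing the suppressed logarithms and using $D \le 1$ for small $N$ yields $D(F_N(x,a_\bfu), A) \le M N^{-1-\epsilon}$ for all $x$ and $N$, which is the claim.

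The main obstacle is the uniformity in $N$ in the metric estimate of the second paragraph: one needs an \emph{almost-sure} bound on a sum over infinitely many frequencies $h$ with wildly varying $\|h\cdot u_i\|$, valid \emph{simultaneously for all $N$} (and for cutoffs $H$ that are large powers of $N$). This is exactly Laczkovich's technical achievement; by contrast the geometric series of the first paragraph, and the Whitney decomposition and Erd\H{o}s--Tur\'an--Koksma inequality of the third, are standard. A secondary point requiring care is that the Whitney-type decomposition genuinely covers $A$, and hence $A \triangle B$, by only $\approx H^{\Delta(\boundary A)}$ boxes across all scales rather than the trivial $H^k$ --- it is this that keeps the factor $m$ small enough for the exponent bookkeeping to close.
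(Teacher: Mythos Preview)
The paper does not give its own proof of this lemma: it is quoted as a black box from Laczkovich~\cite{L92} (with a pointer also to \cite{GMP}), and the authors explicitly say their paper is ``essentially self-contained except for our use of Laczkovich's discrepancy estimates.'' The only commentary following the statement is a dictionary matching their notation to Laczkovich's. So there is no in-paper proof to compare against.

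Your sketch is a faithful high-level outline of the standard route to such bounds --- factorization of the Weyl sums over the $d$ generators, Schmidt-type metric Diophantine input to control $\prod_i \min(1,(N\|h\cdot u_i\|)^{-1})$ for a.e.\ $\bfu$, the Erd\H{o}s--Tur\'an--Koksma inequality to convert exponential-sum control into box discrepancy, and a dyadic/Whitney approximation of $A$ by $O(H^{\Delta(\boundary A)+\tau})$ boxes to exploit the small-boundary hypothesis --- and this is indeed the circle of ideas in \cite{L92}, \cite{S}, \cite{NW}. Your exponent bookkeeping is consistent: the interval $1/\gamma < \beta < (d-1)/s$ is nonempty exactly when $d > k/(k-s)$, so the paper's hypothesis $d > 2k/(k-s)$ is more than enough, as you note.

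That said, your write-up is a sketch rather than a proof, and you are candid about this: the passage from the first-moment bound $\mathbb{E}[\cdots] = O(N^{-d}(\log N)^{O(1)})$ to an almost-sure bound valid \emph{simultaneously for all $N$ and all cutoffs $H$ up to a power of $N$} is exactly the substantive step, and you defer it to ``Laczkovich's technical achievement.'' That is an accurate assessment of where the work lies, but it means your argument, as written, still leans on \cite{L92} at the crucial point --- which is precisely what the paper itself does.
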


The variables $u$, $(x_1, \ldots, x_d)$, and $\eta$ in \cite{L92}
correspond to our $x$, $\bfu$, and $\epsilon$, respectively. 

Though we will not need this observation for our proof, we remark that the
order of the quantifiers over $\bfu$ and $A$ can be reversed here.  Almost every
$\bfu \in (\T^k)^d$ satisfies the lemma for every measurable $A \subset \T^k$.
This follows from Laczkovich's argument.

By a graph $G$ on a set $V$, we mean a (simple undirected) graph with
vertex set $V$, so the edge relation of $G$ will be a symmetric irreflexive
relation on $V$. If $G$ is a graph on a vertex set $V$ and $x \in V$, then
we write $[x]_G$ for the set of vertices in the same connected component as
$x$. We let $d_G$ be the graph metric on the vertex set of $G$. We will write $d$ instead of $d_G$ when the
graph $G$ is clear from context.
Let $N_G(x)$ be the set of neighbors of $x$, so $N_G(x) = \{y \in V \colon
d_G(x,y) = 1\}$. We will write $N(x)$ when the
graph is clear.  A graph $G$ is said to be \define{locally
finite} if $N_G(x)$ is finite for every vertex $x$ of $G$. If $U \subset V$,
then we let $G \restriction U$ be the induced subgraph of $G$ on the set
$U$. If $F$ is a set of edges of $G$, we let $G - F$ be the subgraph of $G$
obtain by removed the edges in $F$. 

If $a \from \Z^d \actson X$ is an action of $\Z^d$ on a set $X$, let $G_a$
be the graph with vertex set $X$ where there is an edge from $x$ to $y$
if $\gamma \cdot x = y$ for some $\gamma \in \Z^d$ with $\norm{\gamma}_\infty =
1$. Here $|(\gamma_1, \ldots, \gamma_d) |_\infty = \sup_i |\gamma_i|$ is
the sup norm. The bulk of our proof is establishing the existence of
certain types of flows (defined in Section~\ref{flow_defn_section}) on the graph $G_{a_{\bfu}}$ where $a_{\bfu}$ is as
in Lemma~\ref{L_discrep}.

Recall that a \define{standard Borel space} is a set $X$ equipped with a
$\sigma$-algebra generated by a Polish (separable, completely metrizable)
topology on $X$. The space $\T^k$ equipped
with its Borel sets is an example of a standard Borel space. If $X$ is a
standard Borel space and $n > 0$ then we equip $X^n$ with the standard Borel space
arising from the product topology on $X^n$. We will
use $[X]^{< \infty}$ to note the standard Borel space of all finite subsets
of $X$. Here we use the standard Borel structure where $B \subset [X]^{< \infty}$ is
Borel if for every $n$, $\{(x_1, \ldots, x_n) \in X^n : \{x_1, \ldots,
x_n\} \in B\}$ is a Borel subset of $X^n$. 

In some of our proofs, we will need to make an arbitrary choice between
finitely many elements of some standard Borel space $X$. In this situation
we will fix some Borel linear ordering $<$ on $X$, and choose the $<$-least
element. Note that every standard Borel space admits a Borel linear
ordering (one can see this using the isomorphism theorem for standard Borel
spaces and the fact that the usual linear ordering on $\R$ is Borel
\cite[Theorem 15.6]{K}). In the case when $X = \T^k$, we may simply use the
lexicographic ordering on $[0,1)^k$. 

A \define{Borel graph} is a graph whose vertices are the elements of a
standard Borel space $X$, and whose edge relation is Borel as a subset of
$X \times X$. For a recent survey of the theory of Borel
graphs, see \cite{KM}. If $\bfu \in (\T^k)^d$, then the graph
$G_{a_{\bfu}}$ is an example of a Borel graph; since the action $a_{\bfu}$
is continuous, the edge relation of $G_{a_{\bfu}}$ is closed. In order to prove
Theorem~\ref{main_thm}, we will not need to consider any Borel graphs other
than $G_{a_{\bfu}}$. However, some of our lemmas are stated in the
generality of 
any Borel graph of the form $G_a$, where $a$ is a free Borel action of
$\Z^d$.
If $G$ is a Borel graph
on $X$, we let $[G]^{< \infty}$ be the set of all finite subsets of $X$
that lie in a single connected component of $G$. This is a Borel subset of
$[X]^{< \infty}$.

\section{Flows in graphs}\label{flow_defn_section}
Our proof will use the following types of flows on graphs.
Suppose $G$ is a locally finite graph with vertex set $V$, and $f:V \to \R$. An
\define{$f$-flow} on $G$ is a real-valued function $\phi$ on the edges
of $G$ such that $\phi(x,y) = - \phi(y,x)$ for every edge $(x,y)$ of $G$, and
such that for every $x \in V$,
\[f(x) = \sum_{y \in N(x)} \phi(x,y).\]

If $\epsilon > 0$, then an \define{$(\epsilon,f)$-flow} is a real-valued
function $\phi$ on the edges of $G$ such that
$\phi(x,y) = - \phi(y,x)$ for every edge $(x,y)$ of $G$ and such that for every $x \in V$,
\[\left| f(x) - \sum_{y \in N(x)} \phi(x,y) \right| < \epsilon.\]

Suppose that $c$ is a nonnegative function on the edges of $G$ (where we
may have $c(x,y) \neq c(y,x)$). We call $c$ a
\define{capacity function} on $G$ and we say that an $f$-flow $\phi$
\define{is bounded by $c$} if $\phi(x,y) \leq c(x,y)$ for every edge
$(x,y)$ in $G$. We say that an $f$-flow $\phi$ is \define{bounded} if it is
bounded by a constant capacity function.

If $\phi_1, \ldots,
\phi_n$ are $f$-flows on a graph $G$, then their average $\frac{1}{n}(\sum_i
\phi_i)$ is also an $f$-flow. The average of finitely many $(\epsilon,f)$-flows is similarly an $(\epsilon,f)$-flow. At
a key step in our proof of Lemma~\ref{flow_lemma}, we will need to average
over flows in this way. This is the reason why we have to use 
real-valued flows (instead of matchings) in our proof. 

A folklore restatement of the max-flow min-cut theorem characterizes 
exactly when a finite graph $G$ admits an $f$-flow
bounded by a capacity function $c$. Roughly, for every finite set $F$ of
vertices, $\sum_{x \in F} f(x)$ should be
at most the capacity of the edges leaving $F$. 

\begin{thm}\label{finite_flow_existence}
  Suppose $G$ is finite graph on $X$, $f \from X \to \R$ is a
  function, and $c$ is a capacity function for $G$. 
  Then $G$ has an $f$-flow
  bounded by $c$ if and only if for every set $F \subset X$, 
  \[ -\sum_{\{(x,y) \in G : x \notin F \land y \in F\}} c(x,y)
  \leq \sum_{x \in F} f(x) \leq \sum_{\{(x,y) \in G : x \in F \land y \notin
  F\}} c(x,y).\]
\end{thm}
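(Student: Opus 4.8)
The plan is to reduce Theorem~\ref{finite_flow_existence} to the classical max-flow min-cut theorem for a single source and single sink by adding two auxiliary vertices. First I would construct an auxiliary directed graph $G^*$: take all the vertices of $G$ together with a new source $s$ and a new sink $t$. Keep each edge $(x,y)$ of $G$ with its given capacity $c(x,y)$ (treating $G$ as a directed graph with both orientations of each undirected edge present, as the statement's asymmetric $c$ already suggests). For each vertex $x$ with $f(x) > 0$, add an edge $(s,x)$ of capacity $f(x)$; for each vertex $x$ with $f(x) < 0$, add an edge $(x,t)$ of capacity $-f(x)$. Then an $f$-flow on $G$ bounded by $c$ corresponds exactly to a maximum $s$-$t$ flow in $G^*$ that saturates every edge out of $s$ (equivalently every edge into $t$); such a flow exists if and only if the value of the max flow equals $\sum_{x : f(x) > 0} f(x)$, which by max-flow min-cut happens if and only if every $s$-$t$ cut has capacity at least $\sum_{x : f(x) > 0} f(x)$.

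The core computation is then to unwind what this cut condition says in terms of the original data. An arbitrary $s$-$t$ cut is determined by a set $S \ni s$, $S \not\ni t$; writing $F = S \cap X$, the capacity of the cut is
\[
\sum_{x \in X \setminus F,\, f(x) > 0} f(x) \;+\; \sum_{x \in F,\, f(x) < 0} (-f(x)) \;+\; \sum_{\{(x,y) \in G\, :\, x \in F,\, y \notin F\}} c(x,y).
\]
Requiring this to be at least $\sum_{x : f(x) > 0} f(x)$ and simplifying the source/sink terms (subtract the common part $\sum_{x \in X \setminus F, f(x) > 0} f(x)$ from both sides) yields
\[
\sum_{x \in F} f(x) \;\leq\; \sum_{\{(x,y) \in G\, :\, x \in F,\, y \notin F\}} c(x,y),
\]
which is the right-hand inequality in the statement. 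The left-hand inequality is obtained symmetrically, either by running the same argument with the roles of source and sink (equivalently $f$ replaced by $-f$ and $c(x,y)$ by $c(y,x)$) reversed, or simply by applying the right-hand inequality to the complement $X \setminus F$ and rearranging. So the two-sided condition over all $F \subset X$ is equivalent to the single-sided cut condition over all cuts, which is equivalent to existence of the desired flow.

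For the converse direction — that an $f$-flow bounded by $c$ forces the inequalities — one can argue directly without the auxiliary graph: given such a $\phi$ and a set $F$, sum the flow-conservation identity $f(x) = \sum_{y \in N(x)} \phi(x,y)$ over $x \in F$; the contributions of edges with both endpoints in $F$ cancel by antisymmetry $\phi(x,y) = -\phi(y,x)$, leaving $\sum_{x \in F} f(x) = \sum_{\{(x,y) \in G\, :\, x \in F,\, y \notin F\}} \phi(x,y)$, and then bounding $\phi(x,y) \leq c(x,y)$ gives the upper bound while bounding $\phi(x,y) = -\phi(y,x) \geq -c(y,x)$ gives the lower bound. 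I expect the main fussy point to be bookkeeping the correspondence between edges of $G^*$ and edges of $G$ carefully — in particular making sure the undirected edges of $G$ are handled consistently as antiparallel directed pairs so that the asymmetric capacity $c(x,y) \neq c(y,x)$ and the antisymmetry $\phi(x,y) = -\phi(y,x)$ interact correctly — and making sure the reduction really produces an integral-free statement that matches the quantifier "for every set $F \subset X$" including the empty set and all of $X$ (which give the trivial bounds).
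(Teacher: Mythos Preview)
Your proposal is correct and follows essentially the same approach as the paper: both reduce to the classical max-flow min-cut theorem by adjoining a source $s$ connected to vertices with $f(x)>0$ (capacity $f(x)$) and a sink $t$ connected to vertices with $f(x)<0$ (capacity $-f(x)$), then verify that the cut condition in $G'$ translates to the stated inequalities on $F\subset X$. Your write-up is in fact more explicit than the paper's, which dismisses the forward direction as ``clear'' and only sketches the cut-capacity computation; your direct argument for the forward direction via summing the conservation law over $F$ and cancelling internal edges by antisymmetry is exactly the standard justification the paper elides.
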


\begin{proof}
The forward direction is clear, so we focus on the reverse. 
Define $G'$ to be the finite
graph containing $G$ as a subgraph where we add two vertices to $X$, a source $s$ and a sink $t$, and edges
as follows. Add an edge $(s,x)$ to each $x \in X$ such that $f(x) > 0$ and add
an edge $(y,t)$ to each $y \in X$ such that $f(y) < 0$. Define a capacity
function $c'$ for $G'$ as follows. 
Let $c'(x,y) = c(x,y)$ for every edge $(x,y)$
in $G$. For every edge $(s,x)$ incident to $s$, let $c'(s,x) = f(x)$ and
for every edge $(y,t)$ incident to $t$, let $c'(y,t) = - f(y)$ and $c'(t,y) =
0$. 
Now apply the max-flow min-cut theorem \cite{D} to the graph
$G'$ with source $s$, sink $t$, and capacity function $c'$.  

Take any $F \subset X$, and let $\{\{s\} \union F,\{t\} \union (X \setminus
F)\}$ be a cut in $G'$. One can show  using either of the two inequalities
on $f$ and $c$ that the capacity of this cut is greater than or equal to 
\[ \sum_{\{x : f(x) > 0 \}} f(x) = - \sum_{\{x : f(x) < 0 \}} f(x). \]
which is the capacity of the cut containing just $s$ or just $t$. This
value is therefore the minimum capacity of a cut in $G'$, and
the restriction to $G$ of the resulting maximal flow on $G'$ is as required.
\end{proof}

For finite graphs, there is no real difference in studying $f$-flows as 
defined above, and flows with a single source and single sink. This is
because one can easily convert between these types of problems using the
idea in the proof of Theorem~\ref{finite_flow_existence} above. However,
for infinite graphs, it becomes complicated to try and use a single source
and sink to model an $f$-flow. (See for instance \cite{ABGPS}). For
example, if $G$ is the infinite $3$-regular tree and $f$ is the constant function $1$
on the vertices of $G$, then $G$ admits an $f$-flow bounded by $1$. (Even
though every vertex in the graph is a source, and there are no sinks).

In our proof, we will be constructing flows on infinite 
graphs. We remark that by taking ultralimits, one can characterize when a
locally finite
graph admits an $f$-flow bounded by a given capacity function. Note that in
this case, we need \emph{both} inequalities given in
Theorem~\ref{finite_flow_existence}.
We
mention this result as a contrast to some of our results about Borel flows
in infinite graphs. However, we will not use
Theorem~\ref{infinite_flow_existence} in our proof of
Theorem~\ref{main_thm}. 

\begin{thm}[Folklore] \label{infinite_flow_existence}
  Suppose $G$ is locally finite graph on $X$, $f \from X \to \R$ is a
  function, and $c$ is a capacity function such that $c(x,y) < \infty$ for
  every edge $(x,y)$ in $G$. Then $G$ has an $f$-flow
  bounded by $c$ if and only if for every finite set $F \subset X$, 
  \[ -\sum_{\{(x,y) \in G : x \notin F \land y \in F\}} c(x,y)
  \leq \sum_{x \in F} f(x) \leq \sum_{\{(x,y) \in G : x \in F \land y \notin
  F\}} c(x,y)\]
\end{thm}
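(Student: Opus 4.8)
The plan is to deduce Theorem~\ref{infinite_flow_existence} from the finite case (Theorem~\ref{finite_flow_existence}) by a compactness argument. First I would fix an exhaustion of $X$ by finite sets $X_0 \subset X_1 \subset \cdots$ with $\bigunion_n X_n = X$; since $G$ is locally finite, we may arrange that each $X_n$ is ``closed under neighbors within $X_{n+1}$'', or more simply just work with the induced subgraphs $G \restriction X_n$. The forward direction is immediate by summing the flow conservation equations over $F$ and using the bound $\phi \leq c$ on the edges leaving $F$ (the left inequality comes symmetrically from $-\phi(x,y) = \phi(y,x) \leq c(y,x)$ on edges entering $F$), exactly as in the finite case.

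For the reverse direction, the idea is to build a finite flow on each $G \restriction X_n$ and take an ultralimit. The subtlety is that the hypothesis only bounds $\sum_{x \in F} f(x)$ in terms of capacities of edges of $G$, but when we restrict to $X_n$ we lose the edges leaving $X_n$; so $G \restriction X_n$ with the function $f$ need not satisfy the hypotheses of Theorem~\ref{finite_flow_existence}. To fix this, I would instead add to $G \restriction X_n$ a single auxiliary vertex $w_n$ adjacent to every vertex of $X_n$ that has a $G$-neighbor outside $X_n$, put capacity $c'(x,w_n) = \sum_{y \in N_G(x) \setminus X_n} c(x,y)$ and $c'(w_n,x) = \sum_{y \in N_G(x)\setminus X_n} c(y,x)$ on these new edges, and extend $f$ by setting $f(w_n) = -\sum_{x \in X_n} f(x)$. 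One then checks that this finite graph satisfies the cut condition of Theorem~\ref{finite_flow_existence}: for $F \subset X_n$ not containing $w_n$ this is exactly the given hypothesis applied to $F$ (the new edges only add to the capacity of the cut), and for $F$ containing $w_n$ it reduces to the hypothesis applied to $X_n \setminus F$ after using $f(w_n) = -\sum_{x \in X_n} f(x)$. So $G \restriction X_n$ (with the auxiliary vertex) admits a flow $\phi_n$ bounded by $c'$; restricting $\phi_n$ to the edges of $G \restriction X_n$ gives a function with $|f(x) - \sum_{y \in N_G(x) \inters X_n} \phi_n(x,y)| \leq \sum_{y \in N_G(x) \setminus X_n} c(x,y) + \sum_{y \in N_G(x)\setminus X_n} c(y,x)$, i.e. the conservation defect at $x$ is supported on the ``boundary'' edges and bounded by their capacity.

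Now fix a nonprincipal ultrafilter $\mathcal{U}$ on $\N$. For each edge $(x,y)$ of $G$, the values $\phi_n(x,y)$ (defined for all $n$ large enough that $x,y \in X_n$) lie in the compact interval $[-c(x,y), c(x,y)]$... wait, they lie in $[\phi_n(x,y) \le c(x,y)$ and $\phi_n(x,y) = -\phi_n(y,x) \ge -c(y,x)]$, so in $[-c(y,x), c(x,y)]$, a compact set since the capacities are finite. Define $\phi(x,y) = \lim_{n \to \mathcal{U}} \phi_n(x,y)$. Then $\phi(x,y) = -\phi(y,x)$ and $\phi(x,y) \leq c(x,y)$ pass to the ultralimit. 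For conservation at a fixed vertex $x$: since $G$ is locally finite, $N_G(x)$ is finite, so for $\mathcal{U}$-almost all $n$ we have $N_G(x) \subset X_n$, whence the defect $|f(x) - \sum_{y \in N_G(x)} \phi_n(x,y)|$ vanishes (the boundary edges at $x$ are empty for such $n$); taking the ultralimit over this finite sum gives $f(x) = \sum_{y \in N_G(x)} \phi(x,y)$. Thus $\phi$ is the desired $f$-flow bounded by $c$.

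The main obstacle is the bookkeeping around the auxiliary vertex: one must verify carefully that collapsing all the edges leaving $X_n$ to a single vertex preserves the cut condition in both directions (for cuts that do and do not separate $w_n$ from the bulk), and that $f(w_n) = -\sum_{x\in X_n} f(x)$ is exactly the right choice to make the $s$-side and $t$-side cuts balance. Everything else is routine: the forward direction is a one-line summation, and the ultralimit argument is standard once local finiteness is invoked to kill the boundary defect at each individual vertex in the limit. One should also remark that, unlike Theorem~\ref{finite_flow_existence} where the two inequalities are equivalent given a source/sink reformulation, here genuinely both inequalities in the hypothesis are needed, since the construction uses the hypothesis for $F$ and for $X_n \setminus F$ separately.
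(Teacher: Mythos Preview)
Your approach is essentially the paper's: approximate by finite graphs, add a single auxiliary vertex carrying the defect $-\sum_{X_n} f$, apply Theorem~\ref{finite_flow_existence}, and take an ultralimit. Two small remarks. First, your exhaustion $X_0 \subset X_1 \subset \cdots$ by finite sets presupposes $X$ is countable; the paper handles this by first passing to a single connected component (which is countable by local finiteness) and then using balls $\{y : d_G(x,y) \le n\}$ as the exhaustion---you should insert the same reduction. Second, the paper sets the capacity on the new edges to be \emph{infinite} rather than $\sum_{y \notin X_n} c(x,y)$; this makes the cut verification a one-liner (any cut touching a new edge has infinite capacity, so only cuts interior to the ball or with interior complement matter), whereas your finite-capacity choice forces the more careful bookkeeping you sketch. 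Both variants work, and your observation that the two inequalities are genuinely needed (one for $F$, one for its complement in $X_n$) matches the paper's parenthetical remark before the theorem.
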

\begin{proof}
By working in each connected
component separately, we can assume that $G$ is connected. (This uses
the axiom of choice). 
Fix a vertex $x$
in $G$ and for each $n \geq 0$ consider the graph $G_n$ which is the
induced subgraph of $G$ on the vertex set $\{ y : d_G(x,y) \leq n \}$. Let
$G_n'$ be the graph obtained by adding a single new vertex to $G_n$ and
connecting it to each of the vertices $\{y : d_G(x,y) = n\}$ in $G_n$. Let
$f_n$ be the function on the vertices of $G_n'$ which is equal to $f$ on
$\{y : d_G(x,y) \leq n\}$ and
equal to $- \sum_{\{y : d_G(x,y) \leq n\}} f(y)$ at the new vertex.
Clearly $G_n'$ and $f_n$ satisfy the hypotheses of
Theorem~\ref{finite_flow_existence} for the capacity function $c'$ which is
equal to $c(x,y)$ on edges in $G_n$, and is infinite on the new edges in
$G_n'$. This is because the only sets
$F$ yielding finite total capacities either lie in the interior of the
$n$-ball around $x$, or their complement does.

Let $\phi_n$ be an $f_n$-flow for $G_n'$. Let $U$ be a nonprincipal
ultrafilter on $\N$. Define $\phi$ on the edges of $G$ by the ultralimit $\phi(e) = \lim_U
\phi_n(e)$. The limit converges since $|\phi_n(e)| \leq c(e)$.
It is straightforward to check that $\phi$ is an $f$-flow for $G$. 
\end{proof}

\section{Constructing flows in $G_a$}
\label{flow_section}

For this section, we fix a free action $a \from \Z^d \actson X$ and a
function $f \from X \to \R$. Assuming $f$ satisfies the conditions given in
Lemma~\ref{flow_lemma}, we give an explicit construction of an $f$-flow in
the graph $G_a$.  To prove Theorem~\ref{main_thm}, we will eventually apply
Lemma~\ref{flow_lemma} when the action $a$ is $a_{\bfu}$ for some $\bfu$
satisfying Lemma~\ref{L_discrep}, and $f = \chi_A - \chi_B$, the difference
between the characteristic functions of $A$ and $B$.  Lemma~\ref{L_discrep}
ensures that $f$ satisfies the hypothesis of Lemma~\ref{flow_lemma}. In
this situation, the flow given by Lemma~\ref{flow_lemma} will clearly be
Borel (see Section~\ref{complexity_section}).

For every $i > 0$, let $\pi_i \from \Z^d/(2^i \Z)^d \to \Z^d/(2^{i-1}
\Z)^d$ be the canonical homomorphism. This yields the inverse limit
\[\hat{\Z^d} = \varprojlim_{i \geq 0} \Z^d/(2^i\Z)^d\] where elements of
$\hat{\Z^d}$ are sequences $(h_0, h_1, \ldots)$ such that $\pi_{i}(h_{i}) =
h_{i-1}$ for all $i > 0$.

We begin by making several definitions which depend on $a$ and $f$. 
Below, if $F \subset X$ is finite,
we will write $\sum_F f$ instead of $\sum_{x \in F} f(x)$.

Suppose $x \in X$ and $h \in \Z^d/(2^n \Z)^d$ for some $n > 0$. View
$h$ as a coset of $2^n \Z^d$ and recall that 
$R_{N} = \{(n_1, \ldots, n_d) \in \Z^d : 0 \leq n_i
< N \text{ for every $i \leq d$}\}$. If we let
\[P_{x,h} = \left\{(h' + R_{2^n}) \cdot x : h' \in h\right\},\]
then $P_{x,h}$ is a partition of $[x]_{G_a}$ into sets of size $2^{nd} =
|R_{2^n}|$. For every $y \in [x]_{G_a}$, let $[y]_{(x,h)}$ be the element
of $P_{x,h}$ that contains $y$. Let 
\[Q_{x,h}(y) = \{ [z]_{(x,\pi_n(h))} \colon z \in [y]_{(x,h)} \},\] 
so $Q_{x,h}(y)$ is a partition of $[y]_{(x,h)}$ into $2^d$ many pieces.

Suppose $\gamma \in \Z^d$ is such that $\norm{\gamma}_\infty = 1$. Let
$U_{x,h}(y, \gamma)$ be the set of $z \in [y]_{(x,h)}$
such that $(2^{n-1} \gamma) \cdot
z \in [y]_{(x,h)}$ and there exists some
$0 \leq i < 2^{n-1}$ such that $y =
(i \gamma) \cdot z$. Note that all $z \in U_{x,h}(y, \gamma)$ must
come from a unique element of $Q_{x,h}(y)$, which we will call $Q_{x,h}(y,\gamma)$.
Let $n_{x,h}(y,\gamma \cdot y) = |U_{x,y}(y, \gamma)|$. 

Define a function $\phi_{x,h}$ on the edges of $G_a \restriction [x]_{G_a}$ by
\[\phi_{x,h}(y,\gamma \cdot y) = \frac{1}{2^{nd}} n_{x,h}(y,\gamma\cdot y)
\sum_{Q_{x,h}(y,\gamma)} f.\]
The rough idea is that $\phi_{x,h}$ is defined by working inside each element of
$P_{x,h}$ and for each $z$ in this set, $\phi_{x,h}$ 
moves some mass at $z$ along a path of the
form \linebreak $z, \gamma \cdot z, \ldots, (2^{n-1} \gamma) \cdot z$ to the point
$(2^{n-1} \gamma) \cdot z$. The definition of
$\phi_{x,h}$ at an edge \linebreak $(y, \gamma \cdot y)$ comes from summing the
contribution of all the different $z$'s whose associated path includes $(y,
\gamma \cdot y)$.

Finally, for all $(h_0, h_1, \ldots) \in \hat{\Z^d}$, $y \in
  [x]_{G_a}$, and $\gamma \in \Z^d$ with $\norm{\gamma}_\infty = 1$, let 
\[\psi_{x,(h_0, \ldots, h_n)}(y,\gamma \cdot y) = 
\sum_{i = 1}^n \left(\phi_{x,h_i}(y,\gamma \cdot y) -
  \phi_{x,h_i}(\gamma \cdot y,y) \right).\]

Our definition of $\psi_{x,(h_0, \ldots, h_n)}$ is chosen so that 
the 
``error'' at $y$ which prevents $\psi_{x,(h_0, \ldots, h_n)}$ from being a
flow is exactly the average value of $f$ over $[y]_{(x,h_n)}$. This is the
content of Lemma~\ref{error_lemma}.

\begin{lemma}\label{error_lemma}
For every $x \in X$, $(h_0, h_1 \ldots) \in \hat{\Z^d}$, $n > 0$ and 
$y \in [x]_{G_a}$,
  \[f(y) - \sum_{\norm{\gamma}_\infty = 1} \psi_{x,(h_0, \ldots, h_n)}(y,\gamma
  \cdot y) =
  \frac{1}{2^{nd}} \sum_{[y]_{(x,h_n)}} f\]
  \end{lemma}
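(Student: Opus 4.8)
The plan is to prove the identity by induction on $n$, reducing to the case $n=1$ (a single telescoping layer) and then observing that the sum over the layers telescopes. For the base case, fix $x$, $h = h_1 \in \Z^d/(2\Z)^d$, and $y \in [x]_{G_a}$, and compute $\sum_{\norm{\gamma}_\infty = 1}\bigl(\phi_{x,h}(y,\gamma\cdot y) - \phi_{x,h}(\gamma\cdot y, y)\bigr)$ directly from the definition of $\phi_{x,h}$. The key is that $\phi_{x,h}$ is supported inside each block $[y]_{(x,h)}$, and within that block it transports, for each $z$, a fixed amount of mass $\tfrac{1}{2^{nd}}\sum_{Q_{x,h}(z,\gamma)} f$ along the path $z, \gamma\cdot z, \dots, (2^{n-1}\gamma)\cdot z$. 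At a given vertex $y$, summing the signed contributions $\phi_{x,h}(y,\gamma\cdot y) - \phi_{x,h}(\gamma\cdot y, y)$ over all $\gamma$ with $\norm{\gamma}_\infty=1$ measures the net flow out of $y$ inside the block. Every such transport path either passes through $y$ as an interior vertex (contributing $0$ to the net), starts at $y$ (contributing $+$ the transported mass out), or ends at $y$ (contributing $-$ the transported mass in). So the net flow out of $y$ equals (mass originating at $y$) minus (mass terminating at $y$).

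The heart of the base case is the bookkeeping that identifies these two quantities. I would argue that for $n=1$ the path from each $z$ has length $2^{n-1}=1$, so $z$ is transported directly to $\gamma\cdot z$; the mass originating at $y$, summed over all directions $\gamma$, is $\sum_{\norm\gamma_\infty=1} \phi_{x,h}(y,\gamma\cdot y)$, and I want to show the combined effect of all layers is that the total error is the block average $\tfrac{1}{2^{nd}}\sum_{[y]_{(x,h_n)}} f$. Concretely, passing to the inductive step: assume the identity for $n-1$, i.e.
\[
f(y) - \sum_{\norm{\gamma}_\infty=1}\psi_{x,(h_0,\dots,h_{n-1})}(y,\gamma\cdot y) = \frac{1}{2^{(n-1)d}}\sum_{[y]_{(x,h_{n-1})}} f.
\]
Since $\psi_{x,(h_0,\dots,h_n)} = \psi_{x,(h_0,\dots,h_{n-1})} + \bigl(\phi_{x,h_n}(y,\gamma\cdot y) - \phi_{x,h_n}(\gamma\cdot y, y)\bigr)$, it suffices to show
\[
\sum_{\norm{\gamma}_\infty=1}\bigl(\phi_{x,h_n}(y,\gamma\cdot y) - \phi_{x,h_n}(\gamma\cdot y, y)\bigr) = \frac{1}{2^{(n-1)d}}\sum_{[y]_{(x,h_{n-1})}} f - \frac{1}{2^{nd}}\sum_{[y]_{(x,h_n)}} f.
\]
Here $[y]_{(x,h_{n-1})} = [y]_{(x,\pi_n(h_n))}$ is the element of $Q_{x,h_n}(y)$ containing $y$, one of the $2^d$ sub-blocks of $[y]_{(x,h_n)}$. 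I would verify the displayed identity by the net-flow-out computation above applied to the single layer $h_n$: the mass terminating at $y$ is exactly $\tfrac{1}{2^{nd}}|U_{x,h_n}(y,\cdot)| \cdot(\text{block-of-$Q$ sums})$, which, carefully summed over the $2^{n-1}$ possible "phases" $i$ and over all $\gamma$, collapses to $\tfrac{1}{2^{nd}} 2^{n-1}\sum_{\text{sub-blocks } Q \ni y} (\text{something})$, and the mass originating at $y$ accounts for the complementary sub-blocks, so that the difference gives precisely the sub-block sum $\tfrac{1}{2^{nd}}\cdot 2^d \cdot \tfrac{1}{2^d}(\cdots)$ telescoping against the coarser average.

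The main obstacle I expect is the careful combinatorial accounting of $n_{x,h}(y,\gamma\cdot y) = |U_{x,h}(y,\gamma)|$: one must check that as $z$ ranges over a sub-block $Q\in Q_{x,h}(y)$ and $i$ ranges over $0 \le i < 2^{n-1}$, the points $(i\gamma)\cdot z$ hit each vertex of the block the correct number of times, and that the path endpoints $(2^{n-1}\gamma)\cdot z$ are distributed so that every sub-block is the "target" of exactly one source sub-block under the doubling map in direction $\gamma$. This is where the group structure of $\Z^d/(2^n\Z)^d$ and the definition of $\pi_n$ enter essentially: halving the scale in coordinate direction $\gamma$ identifies pairs of sub-blocks, and the transport $\phi_{x,h_n}$ is precisely designed to move the "correction mass" of one member of each such pair. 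Once this indexing is pinned down, the cancellation is mechanical; I would isolate it as a short sublemma about $U_{x,h}(y,\gamma)$ and its sizes before assembling the telescoping sum. A minor point to handle cleanly is that the definition of $\psi$ only sums $\phi_{x,h_i}$ for $1 \le i \le n$ (not $i=0$), so the induction should start at $n=1$ with $\psi_{x,(h_0)} \equiv 0$ and the right-hand side equal to $f(y)$ itself, i.e. $\tfrac{1}{2^{0\cdot d}}\sum_{[y]_{(x,h_0)}} f$ where $[y]_{(x,h_0)} = \{y\}$ since $h_0 \in \Z^d/(2^0\Z)^d$ is trivial — this checks the base case trivially and confirms the indexing is consistent.
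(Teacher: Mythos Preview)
Your approach is the paper's: induction on $n$, trivial base case at $n=0$ (where $\psi_{x,(h_0)}$ is the empty sum and $[y]_{(x,h_0)}=\{y\}$), and the inductive step reduced to exactly the single-layer identity you display, proved by the net-flow/path-telescoping interpretation in which interior vertices of each transport path cancel and only start and end points contribute.

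The part you left as ``(something)'' is cleaner than your sketch suggests, and one detail is reversed. After reindexing the second term via $\gamma \mapsto -\gamma$ so both edges point in direction $\gamma$, each $z \in U_{x,h_n}(y,\gamma)$ with $z \neq y$ also lies in $U_{x,h_n}(-\gamma\cdot y,\gamma)$ and cancels; there is \emph{no} residual sum over the $2^{n-1}$ phases. What survives, for each admissible $\gamma$, is the outgoing start-term $z=y$ contributing $\tfrac{1}{2^{nd}}\sum_{[y]_{(x,h_{n-1})}} f$ (this is the sub-block \emph{containing} $y$, not a complementary one --- you have the roles swapped) and an incoming end-term contributing $-\tfrac{1}{2^{nd}}\sum_{[(2^{n-1}\gamma)\cdot y]_{(x,h_{n-1})}} f$. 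The admissible directions form $S_y = \{\gamma : \norm{\gamma}_\infty = 1,\ (2^{n-1}\gamma)\cdot y \in [y]_{(x,h_n)}\}$, and the one combinatorial fact needed is $|S_y| = 2^d - 1$, which holds because $\gamma \mapsto [(2^{n-1}\gamma)\cdot y]_{(x,h_{n-1})}$ for $\gamma \in S_y \cup \{0\}$ enumerates the $2^d$ elements of $Q_{x,h_n}(y)$. Plugging this in gives your displayed identity immediately.
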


\begin{proof}
We work by induction. For the base case of $n = 0$, the left hand size is $f(y)$
since the summation defining $\psi_{x,(h_0)}$ is empty. The right hand
side is also $f(y)$ since $[y]_{(x,h_0)} = \{y\}$. 

Let the left hand size of the above equation be 
\[\theta_n(y) = f(y) - \sum_{\norm{\gamma}_\infty = 1}\psi_{x,(h_0, \ldots,
h_n)}(y,\gamma \cdot y).\]
Note that
\[
\theta_n(y) = \theta_{n-1}(y) - \left( \sum_{\norm{\gamma}_\infty = 1} \phi_{x,h_n}(y,
\gamma \cdot y) -\phi_{x,h_n}(-\gamma \cdot y,y) \right)
\]
where we have changed $\phi_{x,h_n}(\gamma \cdot y,y)$ to $\phi_{x,h_n}(-
\gamma \cdot y, y)$ in the second term in the summation, by using the fact
that we are summing over all $\gamma$ with $\norm{\gamma}_\infty = 1$. Note
that $(-\gamma \cdot y, y) = (-\gamma \cdot y, \gamma \cdot (- \gamma)
\cdot y)$ is an edge oriented in the ``direction'' of $\gamma$.

We now compute this sum. Fix $z \in U_{x,h_n}(y,\gamma)$ so that $z \in
[y]_{(x,h_n)}$, $2^{n-1}\gamma \cdot z \in [y]_{(x,h_n)}$ and $y =
(i\gamma)\cdot z$ for some $0 \leq i <2^{n-1}$.
If $y \neq z$, then $z$ is counted in both
$n_{x,h_n}(y, \gamma\cdot y)$ and $n_{x,h_n}(-\gamma\cdot y, y)$.  Moreover it
contributes the same amount to $\phi_{x,h_n}(y,\gamma\cdot y)$ and
$\phi_{x,h_n}(-\gamma\cdot y, y)$.  Using this fact to cancel corresponding
terms and summing over $\gamma$, we get
\begin{multline*}
\sum_{\norm{\gamma}_\infty =1} \phi_{h_n,x}(y, \gamma \cdot y)
-\phi_{h_n,x}(-\gamma \cdot y,y)  \\ 
= \frac{1}{2^{nd}} \left( |S_y| \sum_{[y]_{(x,h_{n-1})}} f -
\sum_{z \in \{(2^{n-1} \gamma) \cdot y : \gamma \in S_y\}}
\sum_{[z]_{(x,h_{n-1})}} f \right).  
\end{multline*}
where $S_y = \{\gamma \colon \norm{\gamma}_\infty = 1 \land (2^{n-1}
\gamma) \cdot y \in [y]_{(x,h_n)}\}$.  Note that $\{[(2^{n-1} \gamma) \cdot
y]_{(x,h_{n-1})} : \gamma \in (S_y \union \{0\})\} = Q_{x,h_n}(y)$ and so
$S_y$ has $2^d - 1$ elements since $Q_{x,h}(y)$ has $2^d$ many elements. 

Using our induction hypothesis that $\theta_{n-1}(y) = \frac{1}{2^{(n-1)d}}
\sum_{[y]_{(x,h_{n-1})}} f$ and simplifying, we get:
\[
\theta_n(y) = \frac{1}{2^{nd}} \left( \sum_{[y]_{(x,h_{n-1})}} f 
+ \sum_{z \in \{(2^{n-1} \gamma)
\cdot y : \gamma \in S_y\}}
\sum_{[z]_{(x,h_{n-1})}} f \right)\]
which is equal to
$ \frac{1}{2^{nd}} \sum_{[y]_{(x,h_n)}} f$
using the fact from above that
$\{[(2^{n-1} \gamma) \cdot y]_{(x,h_{n-1})} : \gamma \in (S_y
\union \{0\})\} = Q_{x,h_n}(y)$ is a partition of $[y]_{(x,h_n)}$. 
\end{proof}

Lemma~\ref{error_lemma} implies that for every $x \in X$ and $(h_0,
h_1 \ldots) \in \hat{\Z^d}$, the function $\lim_{n \to \infty} \psi_{x,(h_0,
\ldots, h_n)}(y,z)$ will be an $f$-flow provided it converges everywhere
and $\lim_{n \to \infty} \frac{1}{2^{nd}} \sum_{[y]_{(x,h_n)}} f \to 0$
everywhere. However, if $a \from \Z^d \actson X$ is a Borel action, then we
cannot hope to pick a single point $x$ out of each
connected component of $G_a$ in a Borel way to use as a base point in this
construction. (For example, for the action $a_{\bfu}$, a set which meets
$\T^k$ exactly once in each orbit must be nonmeasurable).
Instead, we will average the above construction over every possible element
of $\hat{\Z^d}$, and then use the fact that the resulting function does not
depend on the base point $x$ that we choose. 

For every $x \in X$, define
\[\psi_x(y,z) = \sum_{n > 0} \frac{1}{2^{nd}} \sum_{h \in \Z^d/(2^n
\Z)^d} \phi_{x,h}(y,z) - \phi_{x,h}(z,y).\]
Finally, let $\psi$ be defined on every edge $(y,z)$ in $G_a$ by 
\[\psi(y,z) = \psi_y(y,z).\]

\begin{lemma}\label{flow_lemma}
  Suppose there is a function $\Phi \from \N \to \R$ such that for every $y \in X$
  \[\left|\sum_{R_{2^n} \cdot y} f \right| < \Phi(2^n)\]
  and
  \[c = \frac{1}{2^{d-1}} \sum_{n = 0}^\infty
  \frac{\Phi(2^n)}{2^{n(d-1)}}\] 
  is finite. Then $\psi$ is an $f$-flow bounded by $c$. 
\end{lemma}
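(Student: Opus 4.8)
The plan is to establish three things: that $\psi$ is well-defined (the defining series converges absolutely at every edge), that it is antisymmetric, and that the flow equation holds at every vertex with the correct bound. Antisymmetry is immediate from the definition, since swapping the roles of $y$ and $z$ negates each summand $\phi_{x,h}(y,z) - \phi_{x,h}(z,y)$, but one must first check that the value $\psi(y,z) = \psi_y(y,z)$ does not depend on which endpoint of the edge we designate as the base point; I would prove the stronger fact that $\psi_x(y,z) = \psi_{x'}(y,z)$ whenever $x$ and $x'$ lie in the same $G_a$-component, by unwinding the definitions of $P_{x,h}$, $[y]_{(x,h)}$, and $Q_{x,h}(y)$ and observing that for fixed $n$, as $h$ ranges over all of $\Z^d/(2^n\Z)^d$ the collection of partitions $\{P_{x,h}\}_h$ coincides with $\{P_{x',h}\}_h$ (they are translates of each other), so the averaged quantity $\frac{1}{2^{nd}}\sum_h (\phi_{x,h}(y,z) - \phi_{x,h}(z,y))$ is genuinely base-point-free. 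This is the conceptual heart of why averaging over $\hat{\Z^d}$ rescues the construction from the non-existence of a Borel transversal.

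For convergence and the bound, I would estimate a single term $|\phi_{x,h}(y,\gamma\cdot y)|$. By definition this is $\frac{1}{2^{nd}} n_{x,h}(y,\gamma\cdot y)\bigl|\sum_{Q_{x,h}(y,\gamma)} f\bigr|$; the multiplicity $n_{x,h}(y,\gamma\cdot y)$ is at most $2^{n-1}$ (the paths through a fixed directed edge have length $2^{n-1}$, so at most that many source points $z$ use it), and $Q_{x,h}(y,\gamma)$ is an element of the partition $Q_{x,h}(y)$, hence a translate of $R_{2^{n-1}}\cdot(\text{some point})$, so the hypothesis gives $\bigl|\sum_{Q_{x,h}(y,\gamma)} f\bigr| < \Phi(2^{n-1})$. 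Summing over the $2d$ directions $\gamma$ with $\|\gamma\|_\infty = 1$ and over the $2^{nd}$ cosets $h$, and then summing the factor $\frac{1}{2^{nd}}$ from the definition of $\psi_x$, the $2^{nd}$'s cancel and one is left with a bound of the shape $\sum_n 2d\cdot 2^{n-1}\cdot 2^{-nd}\cdot\Phi(2^{n-1})$ on $\sum_{\|\gamma\|_\infty=1}|\psi(y,\gamma\cdot y)|$, which after reindexing is comparable to $c = \frac{1}{2^{d-1}}\sum_n \Phi(2^n)/2^{n(d-1)}$; I would be a little careful with the exact constants so that the per-edge bound comes out as exactly $c$ rather than merely $O(c)$. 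In particular this shows each series $\psi_x(y,z)$ converges absolutely, so $\psi$ is well-defined and bounded by $c$.

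Finally, to verify the flow equation $f(y) = \sum_{\|\gamma\|_\infty=1}\psi(y,\gamma\cdot y)$, I would fix a base point $x$ (legitimate by the base-point independence already proved) and an element $(h_0,h_1,\ldots)\in\hat{\Z^d}$, and apply Lemma~\ref{error_lemma}: for each $n$,
\[f(y) - \sum_{\|\gamma\|_\infty = 1}\psi_{x,(h_0,\ldots,h_n)}(y,\gamma\cdot y) = \frac{1}{2^{nd}}\sum_{[y]_{(x,h_n)}} f,\]
and the right side is bounded in absolute value by $2^{-nd}\Phi(2^n)\to 0$ since $[y]_{(x,h_n)}$ is a translate of $R_{2^n}\cdot(\text{some point})$. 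Hence $\sum_{\|\gamma\|_\infty=1}\psi_{x,(h_0,\ldots,h_n)}(y,\gamma\cdot y)\to f(y)$ as $n\to\infty$. It then remains to identify $\lim_n \psi_{x,(h_0,\ldots,h_n)}(y,z)$, for a \emph{fixed} generic choice of $(h_i)$, with the \emph{averaged} quantity $\psi_x(y,z)$; this is where I expect the main bookkeeping obstacle. The point is that $\psi_{x,(h_0,\ldots,h_n)}$ already telescopes the $\phi_{x,h_i}(y,z)-\phi_{x,h_i}(z,y)$ terms along one coherent thread $h_0,h_1,\ldots,h_n$ of the inverse system, whereas $\psi_x$ sums $\frac{1}{2^{nd}}\sum_{h\in\Z^d/(2^n\Z)^d}$ over \emph{all} cosets at each level. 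I would reconcile these by checking that for fixed level $n$ the term $\phi_{x,h}(y,z)-\phi_{x,h}(z,y)$ depends on $h$ only through which element of $P_{x,h}$ contains $y$ (and the orientation data), so that averaging over all $2^{nd}$ cosets $h$ at level $n$ equals averaging over all coherent threads passing through level $n$; with a suitable (Haar) probability measure on $\hat{\Z^d}$ this makes $\psi_x(y,z)$ literally the expectation of $\lim_n\psi_{x,(h_0,\ldots,h_n)}(y,z)$ over $(h_i)$, and since by the above that limit equals the same value $\psi_x(y,z)$ for every thread, we get $\sum_{\|\gamma\|_\infty=1}\psi_x(y,\gamma\cdot y) = f(y)$, as desired. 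Dominated convergence (justified by the uniform per-term bound from the previous paragraph) licenses interchanging the limit with the expectation.
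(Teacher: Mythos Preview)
Your overall strategy matches the paper's: bound $|\phi_{x,h}|$ pointwise to get absolute convergence and the bound $c$, use Lemma~\ref{error_lemma} to control the defect, and establish base-point independence via the fact that averaging over all $h \in \Z^d/(2^n\Z)^d$ is translation-invariant. But your verification of the flow equation contains a false assertion.

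You claim that $\lim_n \psi_{x,(h_0,\ldots,h_n)}(y,z) = \psi_x(y,z)$ for \emph{every} thread $(h_i) \in \hat{\Z^d}$. This is wrong: the limit $\sum_{i \geq 1}\bigl[\phi_{x,h_i}(y,z) - \phi_{x,h_i}(z,y)\bigr]$ genuinely depends on the thread, since different cosets $h$ at the same level give different partitions $P_{x,h}$ and hence different values of $\phi_{x,h}(y,z)$; the quantity $\psi_x(y,z)$ is only their average. What you have correctly shown is that $\psi_x(y,z)$ is the \emph{expectation} of this limit over Haar measure on $\hat{\Z^d}$, not that the limit is constant.

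The repair is immediate, and your own ingredients suffice: for each thread, $\lim_n \psi_{x,(h_0,\ldots,h_n)}$ is an $f$-flow (by Lemma~\ref{error_lemma} and $\Phi(2^n)/2^{nd} \to 0$), and since the flow equation $\sum_\gamma \psi(y,\gamma\cdot y)=f(y)$ is linear in $\psi$, an expectation of $f$-flows is again an $f$-flow. The paper streamlines this by avoiding the infinite expectation entirely: the first $n$ terms of the series defining $\psi_x$ equal the \emph{finite} uniform average of $\psi_{x,(h_0,\ldots,h_n)}$ over the $2^{nd}$ length-$n$ threads (exactly the counting observation you made), a finite average of $(\epsilon,f)$-flows is an $(\epsilon,f)$-flow, so the partial sums of $\psi_x$ are $(\Phi(2^n)/2^{nd},f)$-flows, and passing to the limit gives that $\psi_x$ is an $f$-flow. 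No dominated convergence or Haar measure is needed.

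Two minor points: there are $3^d-1$, not $2d$, directions $\gamma$ with $\|\gamma\|_\infty = 1$; but this is irrelevant since the required bound on $|\psi(y,z)|$ is per edge and involves no sum over $\gamma$. Carrying out your single-edge estimate carefully (each of the $2^{nd}$ cosets contributes at most $2\cdot 2^{n-1}\Phi(2^{n-1})/2^{nd}$, and one then divides by $2^{nd}$) gives exactly $c$ after reindexing.
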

\begin{proof}
  We begin by showing that for every $x \in X$, $\psi_x$ is an $f$-flow of
  $G \restriction [x]_{G_a}$ bounded by $c$.

  If $h_n \in \Z^d/(2^n \Z)^d$, then since 
  $n_{x,h_n}(y,s) \leq 2^{n-1}$,
  and $|\sum_{Q_{x,h}(y,\gamma)} f)| \leq \Phi(2^{n-1})$, we get 
  $|\phi_{x,h}(y,\gamma \cdot y)| \leq 2^{n-1} \frac{\Phi(2^{n-1})}{2^{nd}}$ 
  Hence, 
  \[|\psi_x(y,z)| \leq \sum_{n = 1}^\infty 2 \cdot 2^{n-1}
  \frac{\Phi(2^{n-1})}{2^{nd}} = 
  \frac{1}{2^{d-1}} \sum_{n = 1}^{\infty}
  \frac{\Phi(2^{n-1})}{2^{(n-1)(d-1)}} = c.
  \]

  If we consider the first $n$ terms in
  the summation defining $\psi_x$, this is equal to 
  the average of $\psi_{x,(h_0, \ldots, h_n)}$
  over all sequences $(h_0, \ldots, h_n)$ with $h_i \in \Z^d/(2^i \Z)^d$
  and $\pi_i(h_i) = h_{i-1}$. This is because there are the same number of
  these sequences
  $(h_0, \ldots, h_n)$ containing any given $h_i \in \Z^d/(2^i \Z)^d$. 
  Each such $\psi_{x,(h_0, \ldots, h_n)}$
  is an $(\epsilon,f)$-flow for $\epsilon = \Phi(2^n)/2^{nd}$ by
  Lemma~\ref{error_lemma}.
  Since the average of finitely many
  $(\epsilon,f)$-flows is an $(\epsilon,f)$-flow, it follows that $\psi_x$
  is a limit of $(\epsilon,f)$-flows with $\epsilon = \Phi(2^n)/2^{nd}$,
  which approaches $0$ as $n$ goes to $\infty$. Finally, $\psi_x(y,z) = -
  \psi_x(z,y)$ by definition. This finishes the proof that $\psi_x$ is an
  $f$-flow bounded by $c$.

  To show that $\psi$ is an $f$-flow of $G_a$, it is enough to show that
  for all $x \in X$ and $g \in \Z^d$, $\psi_x = \psi_{g \cdot x}$.
  Now $P_{g \cdot x,-g+ h} = P_{x,h}$ and so
  $\phi_{g \cdot x,-g+ h} = \phi_{x,h}$. From this we can
conclude that $\psi_{g \cdot x} = \psi_x$, since each
term in the summation defining $\psi_x$
averages $\phi_{x,h}$ over all $h \in \Z^d/(2^n\Z)^d$, which is
equal to the average of $\phi_{g \cdot x,-g + h}$ over all $h \in
\Z^d/(2^n\Z)^d$, and hence the average of $\phi_{g \cdot x, h}$ over all $h
\in \Z^d/(2^n\Z)^d$.

\end{proof}

\section{Integral Borel flows}
\label{integral_section}

Suppose $G$ is a graph on $X$, and $f \from X \to \Z$ is a function. 
An \define{integral} $f$-flow is an $f$-flow $\phi$
so that $\phi(x,y)$ is an integer for every edge $(x,y)$ in $G$. 
In this section, we consider the problem of turning real-valued $f$-flows into
integral $f$-flows. 

Classically, the question of when a locally finite graph admits an integral $f$-flow
is easy to answer. It is usually called the \define{integral flow theorem}.

\begin{thm}[The integral flow theorem \cite{D}]\label{ift}
  Suppose $G$ is a locally finite graph on $X$, $c$ is a capacity function
  for $G$ that takes integer values, and $f \from X \to \Z$ also takes
  integer values. If there is an $f$-flow bounded by $c$, then there
  is an integral $f$-flow bounded by $c$.
\end{thm}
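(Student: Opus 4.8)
The plan is to reduce the locally finite case to the finite case, and the finite case to the integrality of maximum flows in the classical Ford--Fulkerson theorem. First I would dispose of the finite case. Suppose $G$ is finite. As in the proof of Theorem~\ref{finite_flow_existence}, form $G'$ by adjoining a source $s$ and a sink $t$, with an edge $(s,x)$ of capacity $c'(s,x)=f(x)$ whenever $f(x)>0$ and an edge $(y,t)$ of capacity $c'(y,t)=-f(y)$ whenever $f(y)<0$, keeping $c'(x,y)=c(x,y)$ on the edges of $G$. A given $f$-flow on $G$ bounded by $c$ extends canonically to an $s$--$t$ flow on $G'$ bounded by $c'$ which saturates every edge out of $s$; its value is therefore $V := \sum_{\{x : f(x)>0\}} f(x)$, which equals the total capacity of the edges leaving $s$ and is hence the maximum possible value of any $s$--$t$ flow in $G'$. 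Since $f$ and $c$ are integer-valued, so is $c'$, and the integrality half of the max-flow min-cut theorem \cite{D} yields an \emph{integral} $s$--$t$ flow of value $V$; being of value $V$ it also saturates every edge out of $s$ and into $t$, so its restriction to the edges of $G$ is an integral $f$-flow bounded by $c$.

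For the general locally finite case I would work in each connected component separately (using the axiom of choice to make the selections below), so assume $G$ is connected, and let $\phi$ be the given $f$-flow bounded by $c$. Fix a vertex $x_0$, put $B_n=\{y : d_G(x_0,y)\le n\}$ and $S_n=\{y : d_G(x_0,y)=n\}$; each $B_n$ is finite by local finiteness. Form the finite graph $G_n'$ from $G\restriction B_n$ by adding one new vertex $w_n$ joined to every vertex of $S_n$, and let $f_n$ agree with $f$ on $B_n$ and equal $-\sum_{y\in B_n} f(y)$ at $w_n$, so $f_n$ is integer-valued. Routing the net $\phi$-outflow of each $v\in S_n$ to $B^c_n$ along the edge $(v,w_n)$ produces a real $f_n$-flow on $G_n'$ that is bounded by $c$ on the old edges and by some finite bound on the new edges (one checks conservation at $w_n$ by a telescoping computation exactly as in the proof of Theorem~\ref{infinite_flow_existence}); choose an integer $C_n$ exceeding these finitely many bounds and let $c_n$ be $c$ on old edges and $C_n$ on new edges. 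Then $G_n'$, $f_n$, $c_n$ satisfy the hypotheses of the finite case, so there is an integral $f_n$-flow $\psi_n$ on $G_n'$ bounded by $c_n$. The key observation is that any vertex $v$ with $d_G(x_0,v)<n$ has all its $G$-neighbors in $B_n$ and is not joined to $w_n$, so $\psi_n$ satisfies flow conservation with value $f(v)$ at $v$.

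Finally I would extract a limit. Each edge $e$ of $G$ lies in $G\restriction B_n$ for all large $n$, and for those $n$ the integer $\psi_n(e)$ lies in a fixed finite set, so for a nonprincipal ultrafilter $U$ on $\N$ the ultralimit $\psi(e)=\lim_U\psi_n(e)$ is defined; then $\psi$ is integer-valued, antisymmetric, and bounded by $c$. For a fixed vertex $v$, flow conservation at $v$ with value $f(v)$ holds for $\psi_n$ for every $n>d_G(x_0,v)$, hence for all $U$-many $n$, and since it involves only the finitely many edges at $v$ it passes to the ultralimit (ultralimits commute with finite sums). Thus $\psi$ is an integral $f$-flow on $G$ bounded by $c$.

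The finite case is essentially immediate from Ford--Fulkerson, so the real work is in the infinite case, where two points need care. First, one cannot run a single global source--sink reduction because $\sum_{x\in X} f(x)$ need not even be well-defined; this is what forces the exhaustion-and-compactness approach. Second, the naive exhaustion (as in the proof of Theorem~\ref{infinite_flow_existence}) assigns infinite capacity to the new edges, which the integral flow theorem does not permit; the remedy is to use the given real flow $\phi$ to bound the flow needed on the new edges of $G_n'$ and thereby replace $\infty$ by a sufficiently large integer $C_n$. I expect this replacement, together with checking that the $\psi_n$ really do glue along the ultrafilter, to be the only subtle steps.
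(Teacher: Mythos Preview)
Your proof is correct and follows essentially the same route as the paper's: reduce the finite case to Ford--Fulkerson via the auxiliary source/sink graph of Theorem~\ref{finite_flow_existence}, then handle the locally finite case by the exhaustion-plus-ultralimit argument of Theorem~\ref{infinite_flow_existence}. The one point you make explicit that the paper's two-sentence proof glosses over is the need to replace the infinite capacities on the auxiliary edges of $G_n'$ by finite integers $C_n$ (using the given real flow $\phi$ to supply the bound) so that the finite case actually applies; this is a genuine detail, and your treatment of it is correct.
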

\begin{proof}
  Suppose first that $G$ is a finite graph. Consider the graph $G'$ defined
  in the proof of Theorem~\ref{finite_flow_existence}. If we use the
  Ford-Fulkerson algorithm (see \cite{D}) to find a maximal flow for $G'$,
  it will make a flow with only integer values. 

  For infinite graphs, the theorem follows from the finite case using the
  same idea as the proof of Theorem~\ref{infinite_flow_existence}. This is
  because an ultralimit of integer valued functions is integer valued. 
\end{proof}

We also have the following folklore theorem which shows that if $f$ takes integer
values, then we can find an integral $f$-flow ``close'' to any real-valued
$f$-flow. 

\begin{cor}\label{bounded_int_flow}
  Suppose $G$ is locally finite graph on $X$ and $f \from
  X \to \Z$ takes integer values. If $\phi$ is an $f$-flow, then there is
  an integral $f$-flow $\psi$ such that 
  \[|\phi(x,y) - \psi(x,y)| < 1\]
  for every edge $(x,y)$ in $G$.
\end{cor}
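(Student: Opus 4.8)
The plan is to derive Corollary~\ref{bounded_int_flow} from the integral flow theorem (Theorem~\ref{ift}) by constructing an auxiliary capacity function that forces any bounded integral flow to stay within distance $1$ of $\phi$. Given the real-valued $f$-flow $\phi$, the idea is to define a new capacity function $c$ on the edges of $G$ by rounding $\phi$ up and down: set $c(x,y) = \lfloor \phi(x,y) \rfloor + 1$ for each ordered edge $(x,y)$ (equivalently, $c(x,y) = \lceil \phi(x,y) \rceil$ when $\phi(x,y)\notin\Z$, and $c(x,y)=\phi(x,y)+1$ when it is an integer — the point is just to take an integer strictly greater than $\phi(x,y)$ but within $1$ of it). Since $\phi(x,y) = -\phi(y,x)$, we have $c(y,x) = \lfloor -\phi(x,y)\rfloor + 1 = -\lceil \phi(x,y)\rceil + 1 \leq -\phi(x,y)+1$, so the two directed capacities on an edge are compatible in the sense needed. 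This $c$ takes integer values and $c(x,y) < \infty$ for every edge since $\phi$ is finite-valued. Note I do not need $\phi$ itself to be bounded by any constant — the construction is purely local.

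The key steps, in order: (1) Define $c$ as above and check $\phi(x,y) \leq c(x,y)$ for every directed edge, so that $\phi$ is an $f$-flow bounded by $c$. (2) Apply Theorem~\ref{ift} to $G$, $c$, and $f$ (all hypotheses are met: $G$ is locally finite, $c$ is integer-valued, $f$ is integer-valued, and $\phi$ witnesses that an $f$-flow bounded by $c$ exists) to obtain an integral $f$-flow $\psi$ bounded by $c$. (3) Verify the distance bound: for each directed edge $(x,y)$ we have $\psi(x,y) \leq c(x,y) < \phi(x,y) + 1$, and applying the same inequality to the reversed edge gives $\psi(y,x) \leq c(y,x) < \phi(y,x) + 1$, i.e. $-\psi(x,y) < -\phi(x,y) + 1$, so $\psi(x,y) > \phi(x,y) - 1$. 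Combining, $|\psi(x,y) - \phi(x,y)| < 1$ for every edge.

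The only subtlety — and the step I'd be most careful with — is the treatment of edges where $\phi(x,y)$ is already an integer, since there $\lceil\phi(x,y)\rceil = \phi(x,y)$ gives a capacity that is not strictly greater, which would allow $\psi(x,y)$ to equal $\phi(x,y)$ (fine) but would also let $\psi(y,x) = c(y,x) = -\phi(x,y)$, i.e. $\psi(x,y) = \phi(x,y)$ exactly, still fine — actually the strict inequality in the conclusion is preserved as long as at least one rounding is strict, which it is when we use $\lfloor\cdot\rfloor + 1$ uniformly. So using $c(x,y) = \lfloor\phi(x,y)\rfloor + 1$ everywhere (never $\lceil\cdot\rceil$) is the clean choice: then $c(x,y) \geq \phi(x,y)$ always (with equality impossible since $\lfloor t\rfloor + 1 > t$ for all real $t$), $c$ is integer-valued, and both one-sided bounds on $\psi$ are strict. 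This is a short argument; no real obstacle beyond getting the rounding convention consistent between the two orientations of each edge.
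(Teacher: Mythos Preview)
Your approach is the right idea and close in spirit to the paper's, but there is a genuine gap at edges where $\phi(x,y)$ is already an integer. With $c(x,y) = \lfloor \phi(x,y) \rfloor + 1$, on such an edge you get $c(x,y) = \phi(x,y) + 1$ and $c(y,x) = \phi(y,x) + 1$, so the integral flow theorem only yields $\phi(x,y) - 1 \leq \psi(x,y) \leq \phi(x,y) + 1$; both extremes can occur. Concretely, take $G$ a $3$-cycle, $f \equiv 0$, and $\phi \equiv 0$: then your $c \equiv 1$ in every direction, and the unit circulation (sending $1$ around the cycle) is an integral $f$-flow bounded by $c$ with $|\psi - \phi| = 1$ on every edge. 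So the strict inequality fails, and this matters: the paper explicitly uses the consequence that $\psi(x,y) = \phi(x,y)$ whenever $\phi(x,y) \in \Z$.

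The confusion in your last paragraph is over which inequality must be strict. Choosing $\lfloor t \rfloor + 1$ makes $c(x,y) > \phi(x,y)$ strict, but that is the wrong direction: for $\psi(x,y) - \phi(x,y) < 1$ you need $c(x,y) < \phi(x,y) + 1$, i.e.\ $\lfloor \phi(x,y) \rfloor < \phi(x,y)$, which fails exactly at integers. The easy repair is to set $c(x,y) = \lceil \phi(x,y) \rceil$ when $\phi(x,y) \notin \Z$ and $c(x,y) = \phi(x,y)$ when $\phi(x,y) \in \Z$; this is still an integer capacity with $\phi \leq c$, and now forces $\psi = \phi$ on integer edges. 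The paper's proof is a minor variant of this: it first subtracts an antisymmetric integer rounding $\phi'$ of $\phi$ (round toward zero), applies the integral flow theorem to the remainder $\phi - \phi'$ with a $\{0,1\}$-valued capacity, and adds $\phi'$ back; on edges with $\phi \in \Z$ the remainder is $0$ and both directed capacities vanish, which is exactly the same fix.
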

\begin{proof}
  Let 
  \[\phi'(x,y) = \begin{cases} \lfloor \phi(x,y) \rfloor & \text{ if
  $\phi(x,y) \geq 0$} \\
  \lceil \phi(x,y) \rceil & \text{ if $\phi(x,y) < 0$}
  \end{cases}\]
  Then $\phi'(x,y) = - \phi'(x,y)$, and $\phi'$ is an integral $f'$-flow
  for the function $f'(x) = \sum_{y \in N(x)} \phi'(x,y)$. 
  Thus, $(\phi - \phi')$ is an $(f - f')$-flow which is bounded by the
  capacity function where $c(x,y) = 0$ if $\phi(x,y) - \phi'(x,y) \leq 0$ and
  $c(x,y) = 1$ otherwise. Now applying 
  Theorem~\ref{ift}, there is an integral $(f - f')$-flow $\phi''$ which
  is bounded by $c$. Finally, adding again we see that
  $\psi = \phi'' + \phi'$ is an integral $f$-flow, and $|\phi(x,y) -
  \psi(x,y)| < 1$.
\end{proof}

Note that since $|\phi(x,y)- \psi(x,y)| < 1$ in
Corollary~\ref{bounded_int_flow}, if $\phi(x,y)$ is already an
integer, then $\psi(x,y) = \phi(x,y)$. 

Now in the Borel setting, we have the following corollary of \cite{L88}
which implies that we cannot always turn a real valued Borel $f$-flow into
an integral Borel $f$-flow. 

\begin{cor}[\cite{L88}]
  There is a $2$-regular acyclic Borel graph $G$ on $X$ and a Borel function $f
  \from X \to \Z$ such that $G$ has a Borel $f$-flow, but does not have an
  integral Borel $f$-flow.
\end{cor}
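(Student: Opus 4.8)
The plan is to derive this corollary from Laczkovich's construction of a closed bipartite graph with no measurable perfect matching \cite{L88}. I will use it in the following form: there is a standard Borel space $X$ with a Borel partition $X = X_0 \disjointunion X_1$ and a $2$-regular acyclic Borel graph $G$ on $X$, every edge of which joins a point of $X_0$ to a point of $X_1$, such that $G$ has no Borel perfect matching. (Each connected component of $G$ is then a bi-infinite line whose vertices alternate between $X_0$ and $X_1$, and a Borel perfect matching of $G$ amounts to a Borel choice of one of the two perfect matchings of each of these lines.) Define $f \from X \to \Z$ by $f(x) = 1$ for $x \in X_0$ and $f(x) = -1$ for $x \in X_1$. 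First I will check that $G$ has a Borel $f$-flow, via the explicit formula $\phi(x,y) = \tfrac{1}{2}$ when $(x,y)$ is an edge with $x \in X_0$ and $y \in X_1$, and $\phi(x,y) = -\tfrac{1}{2}$ when $x \in X_1$ and $y \in X_0$: this $\phi$ is Borel and antisymmetric, and since every vertex has both of its neighbors on the opposite side of the partition, $\sum_{y \in N(x)} \phi(x,y) = f(x)$ holds at every $x$.

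The heart of the matter is to show $G$ has no integral Borel $f$-flow, which I will do by showing that such a flow would produce a Borel perfect matching of $G$. Suppose $\psi$ is an integral Borel $f$-flow. For an edge $e = \{u, v\}$ with $u \in X_0$ and $v \in X_1$, put $w(e) = \psi(u, v) \in \Z$; this is a Borel function of $e$. Fix a component of $G$ and enumerate its vertices as $(\ldots, x_{-1}, x_0, x_1, \ldots)$ with $x_n \in X_0$ iff $n$ is even, and let $e_n = \{x_n, x_{n+1}\}$. Writing out the flow condition at $x_n$ and using the antisymmetry of $\psi$, one checks in every case that $w(e_n) = 1 - w(e_{n-1})$; hence along this component $w(e_n)$ takes only the two values $a$ and $1 - a$, where $a = w(e_0) \in \Z$. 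Since $a + (1-a) = 1$, exactly one of $a$ and $1-a$ is $\geq 1$. Therefore the Borel set $M = \{ e \in G : w(e) \geq 1 \}$ meets this component in precisely the edges $e_n$ with $n$ even, or precisely those with $n$ odd, and either way this is a perfect matching of the component. Running this over all components, $M$ is a Borel perfect matching of $G$, contradicting the choice of $G$. Hence no integral Borel $f$-flow exists.

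I expect the only genuine obstacle to be locating the precise form of Laczkovich's result that is needed; the construction of $f$ and the verifications above are then routine. It is instructive to note why the bipartite structure cannot be dispensed with: if $G$ were Borel-orientable — for instance the orbit graph of a free Borel action of $\Z$ — then any Borel $f$-flow with $f$ integer-valued could be modified to a Borel integral one by rounding down the associated transfer function on the vertices (a rounding argument in the spirit of Corollary~\ref{bounded_int_flow}), so no graph of that kind could witness the corollary. A $2$-regular bipartite Borel graph with no Borel perfect matching is automatically not Borel-orientable — a Borel orientation together with the Borel $2$-coloring would exhibit a Borel perfect matching — so Laczkovich's example is exactly the sort of object required here.
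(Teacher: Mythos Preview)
Your proof is correct and follows essentially the same approach as the paper: both use Laczkovich's bipartite $2$-regular Borel graph with no Borel perfect matching, define $f = \chi_{X_0} - \chi_{X_1}$, exhibit the constant $\tfrac{1}{2}$ flow, and argue that an integral Borel $f$-flow would yield a Borel perfect matching. You supply more detail than the paper does in this last step (the paper simply asserts that $\{(x,y) : x \in X_0,\ y \in X_1,\ \psi(x,y) > 0\}$ is a perfect matching), and your closing remark about orientability is a nice addition not present in the original.
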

\begin{proof}
  Let $G$ be the graph defined by Laczkovich in \cite{L88}. This graph is a $2$-regular acyclic Borel graph $G$ on a standard Borel space $X$, that admits a 
  bipartition into two Borel sets $X_0$ and $X_1$. By \cite{L88} this graph
  has no Borel (or even Lebesgue-measurable)
  perfect
  matching.
  Now let $f \from X \to \{-1,1\}$ be the function where $f(x) =
  1$ if $x \in X_0$ and $f(x) = -1$ if $x \in X_1$. If we let
  $\phi(x,y) = 1/2$ and $\phi(y,x) = -1/2$ for every edge $(x,y)$ in $G$ where $x \in X_0$ and $y
  \in X_1$, then $\phi$ is clearly a Borel $f$-flow. 
  
  However, $G$ does not have an integer-valued Borel $f$-flow. For a
  contradiction, suppose $\psi$ was in integer-valued Borel $f$-flow for $G$.
  Then the set of edges $(x,y)$ such that $x \in X_0$ and $y \in X_1$ and
  $\psi(x,y) > 0$ would be a Borel perfect matching of $G$. 
\end{proof}

Despite this, we do have the following ``Borel integral flow theorem'' for graphs induced by free
Borel actions of $\Z^d$ for $d \geq 2$.

\begin{lemma}\label{Borel_integral_flow}
  Suppose $d \geq 2$, $a \from \Z^d \actson X$ is a free Borel action,
  and $G_a$ is the associated graph. Then if $f \from X \to \Z$ is a
  Borel function and $\phi$ is a Borel $f$-flow for $G$, then
  there is an integral Borel $f$-flow $\psi$ such that $|\phi - \psi|
  \leq 3^{d}$.
\end{lemma}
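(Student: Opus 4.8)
The plan is to reduce, via rounding, to the problem of producing a \emph{bounded} integral Borel flow with a prescribed integer divergence, and then to build one by an iterative local rounding procedure driven by the hyperfiniteness witness of Theorem~\ref{cake}. First I would apply the rounding trick from the proof of Corollary~\ref{bounded_int_flow}: let $\phi'$ be the Borel integral flow obtained from $\phi$ by rounding each value towards $0$, and set $\rho = \phi - \phi'$. Then $\rho$ is a Borel $g$-flow for the Borel integer-valued function $g(x) = f(x) - \sum_{y \in N(x)} \phi'(x,y)$, and since $|\rho(x,y)| < 1$ on every edge and every vertex of $G_a$ has exactly $3^d - 1$ neighbours, we get $|g(x)| = |\sum_{y \in N(x)} \rho(x,y)| < 3^d - 1$ for all $x$. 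So it suffices to construct a Borel integral $g$-flow $\sigma$ with $|\sigma(x,y)| \le 3^d - 1$ everywhere: then $\psi = \phi' + \sigma$ is a Borel integral $f$-flow, and $|\phi(x,y) - \psi(x,y)| = |\rho(x,y) - \sigma(x,y)| < 1 + (3^d-1) = 3^d$.

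To build $\sigma$, I would apply Theorem~\ref{cake} to the free Borel $\Z^d$-action $a$ to obtain a Borel ``rectangular toast'': a sequence $\mathcal{D}_1, \mathcal{D}_2, \dots$ of Borel families of pairwise disjoint finite rectangular boxes inside the $G_a$-components, with each tile of $\mathcal{D}_n$ contained in the interior of a tile of $\mathcal{D}_{n+1}$ with a $G_a$-buffer that grows with $n$, with $\bigcup_n \bigcup \mathcal{D}_n = X$, and --- importantly for the bound --- with the boundaries of tiles at consecutive levels well separated. I then define Borel edge functions $\sigma_0 = \rho, \sigma_1, \sigma_2, \dots$ by induction on $n$. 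Given $\sigma_{n-1}$, work inside each $S \in \mathcal{D}_n$ separately: adjoin to $G_a \restriction S$ a single vertex joined to $\boundary S$ that carries off the boundary divergence of $\rho$, so that the divergence of the resulting finite flow on $S$ equals $g\restriction S$, an integer-valued function; apply the finite case of Corollary~\ref{bounded_int_flow} to round the real-valued restriction of $\sigma_{n-1}$ on this finite graph to an integral flow differing from it by less than $1$ on each edge and having divergence $g(x)$ at every $x \in \inter(S)$, while confining all the changes to a bounded neighbourhood of the boundaries of the $\mathcal{D}_{n-1}$-tiles lying inside $S$. Let $\sigma_n$ record the result on edges interior to tiles of $\mathcal{D}_n$ and agree with $\sigma_{n-1}$ on all other edges. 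The only failure of $\sigma_n$ to be a $g$-flow is then supported on $\bigcup_{S \in \mathcal{D}_n} \boundary S$, which the toast places deep in the interiors of tiles at level $n+1$, where it is corrected at the next stage.

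Since the corrections made at level $n$ are confined near the $\mathcal{D}_{n-1}$-boundaries, and every vertex eventually lies arbitrarily deep in the interior of its tile, the sequence $(\sigma_n)$ is eventually constant on every edge; its limit $\sigma$ is Borel and integral, and it is a genuine $g$-flow at every vertex (each vertex being eventually interior, hence eventually corrected). The main obstacle is the quantitative bound $|\sigma| \le 3^d - 1$: naive bookkeeping over all scales at which an edge is touched gives no uniform bound, so the local corrections must be chosen with care --- routing each vertex's integer defect (of absolute value below $3^d - 1$) a bounded graph-distance to the nearest tile boundary along axis-parallel paths, and exploiting the separation of consecutive tile boundaries in the toast to ensure that any edge is acted on at only boundedly many scales and that the accumulated change never exceeds $3^d - 1$. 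I expect verifying this bound, together with checking that the intended corrections can indeed be localised inside each tile, to be the technical heart of the argument; the Borelness of each $\sigma_n$ and of $\sigma$, and the disjointness bookkeeping within a single tile, are then routine given Theorem~\ref{cake}.
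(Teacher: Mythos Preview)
Your approach has two genuine gaps. First, Theorem~\ref{cake} does not give what you claim: it produces a single Borel family $C \subset [X]^{<\infty}$ of \emph{connected} (not rectangular) finite sets with $\bigunion C = X$ and pairwise disjoint $\boundary^n$-neighbourhoods---not a nested sequence $\mathcal{D}_1, \mathcal{D}_2, \ldots$ of rectangular tilings with each tile of $\mathcal{D}_n$ contained in the interior of a tile of $\mathcal{D}_{n+1}$ and growing buffers. Second, and more seriously, you yourself flag the bound $|\sigma| \le 3^d - 1$ as the ``main obstacle'' and leave it unresolved. Your proposed mechanism---routing integer defects along bounded axis-parallel paths and relying on boundary separation so that each edge is touched at only boundedly many scales---does not obviously deliver this specific constant; even granting that the number of scales is bounded, nothing in your sketch ties that bound to $3^d - 1$ rather than some larger function of $d$. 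Without it the conclusion $|\phi - \psi| \le 3^d$ is lost. (There is also a smaller issue: Corollary~\ref{bounded_int_flow} does not by itself let you ``confine all the changes to a bounded neighbourhood'' of the previous level's boundaries; it only promises that integer values are left fixed.)

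The paper's proof is quite different and avoids iteration entirely. It applies Theorem~\ref{cake} once (with $n=3$); since $\bigunion C = X$ and the boundaries are separated, removing the set $D = \bigunion_{F \in C} \boundary F$ of boundary edges from $G_a$ leaves only finite connected components. The work is therefore to make $\phi$ integral on $D$ in a Borel way, and this is where the constant $3^d$ actually comes from. The key new ingredient is Lemma~\ref{euler_lemma}: for each $F \in C$, the auxiliary graph $H_{\boundary F}$ whose vertices are the edges of $\boundary F$, with two such edges adjacent when they lie in a common $3$-cycle of $G_a$, is connected with all degrees even, hence has an Euler cycle. Walking this Euler cycle and at each step pushing the current fractional part around the corresponding $3$-cycle yields an $f$-flow $\phi'$ that is integral on all of $\boundary F$; since every vertex of $H_{\boundary F}$ has degree at most $3^d - 1$, each edge is modified at most $3^d - 1$ times, each time by less than $1$, so $|\phi' - \phi| \le 3^d - 1$. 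Finally, Corollary~\ref{bounded_int_flow} is applied once inside each finite component of $G_a - D$, changing those edges by less than $1$. The total change is below $3^d$, and the bound falls out of the Euler-cycle degree count rather than from any accumulation over scales.
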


In our proof of Theorem~\ref{main_thm}, we will use
Lemma~\ref{Borel_integral_flow} to turn the real-valued Borel flow
constructed in Lemma~\ref{flow_lemma} into an integral Borel flow.

Our proof of Lemma~\ref{Borel_integral_flow} uses the following result of Gao,
Jackson, Krohne, and Seward (see Appendix~\ref{cake_appendix}). Their theorem
answers a question originally due to Ben Miller. 

Suppose $F$ is a finite set of vertices in a graph $G$, let $\boundary F$
be the set of edges that are incident on one vertex in $F$ and one vertex
not in $F$. Now let $\boundary^1 F = \boundary F$, and let $\boundary^{n+1}
F$ be the set of edges that are in $\boundary^n F$ or share a vertex with
an edge in $\boundary^n F$. Finally, let $\boundaryinfty F \subset
\boundary F$ be the set of edges in the boundary of $F$ that are ``visible
from infinity''. That is, $\boundaryinfty F$ is the set of edges $e \in
\boundary F$ such that the unique $x \notin F$ incident to $e$ 
is such that the connected component of $x$ in $G - \boundary F$ is infinite. 

\begin{thm}[Gao, Jackson, Krohne, and Seward]\label{cake}
  Suppose $d \geq 1$, $n > 0$, and $a \from \Z^d \actson X$ is a free Borel
  action of $\Z^d$ on a standard Borel space $X$, and $G_a$ is the
  associated graph. Then there is a Borel set $C \subset [X]^{< \infty}$ such
  $\bigunion C = X$, for every distinct $R, S \in C$, $\boundary^n R$ and
  $\boundary^n S$ are disjoint, and every $S \in C$ is connected and has
  $\boundary S = \boundaryinfty S$. 
\end{thm}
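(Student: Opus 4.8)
The plan is to prove Theorem~\ref{cake} by building the desired family $C$ inductively over scales, using the result of Gao--Jackson that every free Borel action of $\Z^d$ is hyperfinite and, more specifically, admits a sequence of Borel ``rectangular tilings'' of the orbits by boxes of sizes going to infinity. Concretely, one first produces a Borel sequence of equivalence relations $E_0 \subset E_1 \subset \cdots$ on $X$ whose union is the orbit equivalence relation of $a$, where each $E_i$-class is a finite union of translates of a large box $R_{N_i}$ (an $E_i$-class need not itself be a box, but it is a polyomino built from $N_i$-boxes, glued along faces). The key quantitative input we need is that these polyominoes can be taken to be ``fat'': they contain a point whose $d_{G_a}$-distance to the boundary is at least some $k_i$, with $k_i \to \infty$ and $k_i$ large compared to $n$ and to the ``roughness'' of the previous scale. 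This is exactly the kind of statement supplied by the Gao--Jackson--Krohne--Seward machinery, and it is where the real work of the appendix lies.

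Given such a tower, I would take $C$ to be (a Borel selection of) the classes of $E_m$ for a single sufficiently large $m$, but after a ``smoothing'' step that replaces each polyomino $P$ by its interior core in the sense of deleting a collar of width $n+1$ around $\boundary P$ and reassigning those boundary cells. The three required properties are then checked as follows. The covering property $\bigcup C = X$ is maintained by making the reassignment a Borel function from deleted cells to nearby surviving cells — this is where hyperfiniteness again helps, since along the increasing union we can absorb thin collars into the next scale's cells. The separation property, that $\boundary^n R$ and $\boundary^n S$ are disjoint for distinct $R,S\in C$, follows because after deleting a collar of width $n+1$ from each polyomino the surviving cores are pairwise at $d_{G_a}$-distance $\geq 2n+2$, so their $n$-thickened boundaries cannot meet. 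The connectedness of each $S \in C$ is inherited from the fact that polyominoes built by face-gluing are connected and removing a uniform collar from a fat polyomino keeps it connected (one must be slightly careful that the core is nonempty, which is guaranteed by fatness $k_i > n+1$, and that it stays connected, which holds because each $N_i$-box surviving the collar deletion is itself connected to its surviving box-neighbors).

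The subtlest of the four conditions is $\boundary S = \boundaryinfty S$: every boundary edge of $S$ must lead to an infinite component of $G_a - \boundary S$. Here is where I expect the main obstacle. The issue is that a boundary edge of a single core $S$ could, a priori, lead into a small ``pocket'' — a bounded region of $X \setminus S$ that is cut off from infinity by $\boundary S$ together with the geometry of the ambient $\Z^d$ orbit. To rule this out, one cannot look at a single scale in isolation; instead I would use the tower structure. The point is that each core $S$ sits strictly inside an $E_m$-polyomino $P$, which in turn sits inside an $E_{m+1}$-polyomino, etc., and one arranges the construction so that $X \setminus S$ is ``connected to infinity through the complement of the finite piece at every larger scale.'' More precisely, I would choose the cores so that the complement $[x]_{G_a} \setminus S$ is connected in $G_a - \boundary S$ for every $x$ — this is achievable because we have total freedom in how the polyominoes at scale $m$ are shaped: using the GJKS ``hyperaperiodic'' marker structure, the scale-$m$ tiling can be chosen so that each tile is simply connected and its complement within the orbit is connected, a property then inherited by the collared core. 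Then since the orbit $[x]_{G_a}$ is infinite and $[x]_{G_a}\setminus S$ is connected in $G_a - \boundary S$ and contains points arbitrarily far from $S$, every vertex of $[x]_{G_a} \setminus S$ adjacent to $S$ lies in that single infinite component, giving $\boundary S = \boundaryinfty S$.

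In summary, the proof has three ingredients: (1) the Gao--Jackson hyperfiniteness tower with fat, simply connected, complement-connected polyomino tiles at all scales $N_i \to \infty$ (the technical heart, carried out in Appendix~\ref{cake_appendix}); (2) a uniform collar-deletion at width $n+1$ to guarantee the disjointness of $n$-boundaries across distinct tiles while keeping cores nonempty and connected, with a Borel reassignment of the deleted collar cells preserving $\bigcup C = X$; and (3) the observation that complement-connectedness of each tile within its orbit upgrades $\boundary S$ to $\boundaryinfty S$. I would expect step (1), specifically arranging tiles that are simultaneously fat, connected, and have connected complement in a Borel way, to be the main obstacle, and I anticipate it is handled via the explicit ``$\Z^d$ marker region'' combinatorics (analyzing which finite subsets of $\Z^d$ arise as tiles and modifying them locally) rather than by any soft argument.
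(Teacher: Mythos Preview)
Your proposal has a genuine gap at the step you treat as routine: the ``reassignment'' of collar cells. If $C$ arises from a single scale $E_m$ by shrinking each class to its core (deleting a width-$(n{+}1)$ collar), then the cores are pairwise at distance $\geq 2n+2$ and in particular do not cover $X$. You propose to reassign each collar cell to a ``nearby surviving cell,'' but any such reassignment destroys the separation: once a collar cell is attached to some core $S$, the new boundary of $S$ lies at distance $\leq 1$ from the adjacent core $R$, so $\boundary^n S \cap \boundary^n R \neq \emptyset$. You also mention absorbing collars ``into the next scale's cells,'' but that abandons the single-scale picture and requires the scale-$(m{+}1)$ sets themselves to lie in $C$ with \emph{their} $n$-boundaries separated from everything else --- which is the original problem one level up. There is a structural obstruction: no Borel \emph{partition} of each orbit into finite pieces can have pairwise $\boundary^n$-disjoint parts, since adjacent parts always share boundary edges. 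The theorem only asks for $\bigcup C = X$, not that $C$ partition $X$, and the paper exploits this.

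The paper's proof makes $C$ genuinely multi-scale and non-partitioning. For each $i$ it builds a family $D_i$ of connected sets of diameter $\leq r_i$ and pairwise distance $\geq 2r_i$ (with $r_i = n\cdot 12^{i+1}$), arranged so that for $j<i$ any $R \in D_j$ and $Q \in D_i$ are either at distance $> r_0 > 2n$ or satisfy $B_{r_j}(R) \subset Q$. Then $C = \{\tilde S : S \in \bigcup_i D_i\}$ contains sets at all scales simultaneously; boundary-disjointness holds because any two distinct members are either far apart or one is buried deep inside the other. Covering $\bigcup C = X$ comes not from any one scale but from the Boykin--Jackson accumulation lemma (Lemma~\ref{accumulation_lemma}): the seed sets $C_i$ are maximal $4r_i$-discrete sets chosen so that every $x$ is within $r_i/4$ of $C_i$ for infinitely many $i$, hence lies in some $A^i_0 \subset D_i$. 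The Gao--Jackson rectangular tilings you invoke play no role in this argument.

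Finally, $\boundary S = \boundaryinfty S$ is handled far more simply than you propose: rather than engineering simply-connected tiles with connected complement, the paper just replaces each $S$ by its hole-filled version $\tilde S$ (adjoining all finite components of $G_a - \boundary S$), which automatically satisfies $\boundary \tilde S = \boundaryinfty \tilde S$ and has $\boundary \tilde S \subset \boundary S$, so separation is preserved.
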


For an annoucement of this theorem, see the paragraph following Corollary 1.8 in \cite{GJKS}.

The point of this theorem is that the elements of $C$ cover the whole space
$X$, and their boundaries can be chosen to be arbitrarily far apart. The condition that $\boundary
S = \boundaryinfty S$ is incidental and is included just to make
Lemma~\ref{euler_lemma} a little simpler. Given any connected $S \subset
X$, let $\tilde{S}$ be the set of vertices in $S$ together with all $x \in
X$ that are in some finite connected component of $G - \boundary S$.
The idea here is that $\tilde{S}$ is obtained by filling in any ``holes'' inside
$S$. It is clear that $\boundary \tilde{S} \subset \boundary S$, and
that $\boundaryinfty \tilde{S} = \boundary \tilde{S}$. Hence, if $C \subset
[G]^{< \infty}$ satisfies all the conditions of the theorem except the
condition on $\boundaryinfty$, then we can simply replace $C$ with
$\{\tilde{S} : S \in C\}$ to satisfy this last condition.

We need a short combinatorial lemma. Recall than an Euler cycle in a finite
graph is a closed walk that includes each edge in the graph exactly once.

\begin{lemma}\label{euler_lemma}
  Suppose $d \geq 2$, $a \from \Z^d \actson X$ is a free action,
  $G_a$ is the associated graph, and $F \subset X$ is a finite
  $G_a$-connected set with $\boundaryinfty F = \boundary F$. Let
  $H_{\boundary F}$ be the
  graph whose vertex set is the unordered edges in $\boundary F$, that is, $\{\{x,y\} : (x,y) \in \boundary F\}$ and where
  distinct $\{x,y\}, \{z,w\}$ are adjacent in $H_{\boundary F}$ if there is a $3$-cycle 
  in $G_a$ that includes them both. Then $H_{\boundary
  F}$ has an
  Euler cycle.
\end{lemma}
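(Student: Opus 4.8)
The plan is to apply Euler's circuit criterion to the finite graph $H_{\boundary F}$: since $\boundary F = \emptyset$ gives the trivial (empty) Euler cycle, we may assume $\boundary F \neq \emptyset$, and then it suffices to check that $H_{\boundary F}$ is connected, has no isolated vertices, and has all vertex degrees even.

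\textbf{Degrees.} Fix a vertex $e = \{x,y\}$ of $H_{\boundary F}$, say with $x \in F$ and $y \notin F$. Every $3$-cycle of $G_a$ through the edge $\{x,y\}$ has the form $\{x,y,z\}$ with $z \in N_{G_a}(x) \cap N_{G_a}(y)$, and since $x \in F$, $y \notin F$, exactly one of the other two edges $\{x,z\}, \{y,z\}$ lies in $\boundary F$ (namely $\{y,z\}$ if $z \in F$ and $\{x,z\}$ if $z \notin F$). Hence each such $z$ contributes exactly one edge of $H_{\boundary F}$ at $e$, distinct $z$'s give distinct edges, and every edge of $H_{\boundary F}$ at $e$ arises this way, so $\deg_{H_{\boundary F}}(e) = \card{N_{G_a}(x) \cap N_{G_a}(y)}$. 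Using freeness of $a$ to identify $[x]_{G_a}$ with $\Z^d$ so that $x = 0$ and $y = \gamma$ with $\norm{\gamma}_\infty = 1$, a coordinatewise count gives $\card{N_{G_a}(x) \cap N_{G_a}(y)} = 2^{k} 3^{d-k} - 2$, where $k \in \{1, \dots, d\}$ is the number of nonzero coordinates of $\gamma$. For $d \geq 2$ this is always even (it is $2(3^{d-1}-1)$ if $k = 1$, and $2^k 3^{d-k} - 2$ with $4 \mid 2^k 3^{d-k}$ if $k \geq 2$) and at least $2$; so $H_{\boundary F}$ has no isolated vertices and all degrees are even.

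\textbf{Connectivity.} This is the crux. Unwinding the definitions, two edges of $\boundary F$ are joined in $H_{\boundary F}$ exactly when they share a vertex $v$ and their other endpoints $p, p'$ lie on the side of $F$ opposite to $v$ and satisfy $\norm{p - p'}_\infty = 1$; equivalently, $H_{\boundary F}$-adjacency is generated by two ``sliding'' moves, one moving the endpoint in $F$ and one moving the endpoint outside $F$, always between $G_a$-adjacent choices. I would prove connectivity by induction on $\card{F}$, using that $F$ and (within the $G_a$-component, i.e.\ the $\Z^d$-orbit, that contains the connected set $F$) its complement are both $G_a$-connected --- which is exactly what the hypotheses ``$F$ connected'' and $\boundaryinfty F = \boundary F$ amount to, since for finite connected $F$ the latter says that $\Z^d \setminus F$ has no finite component. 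The base case $\card{F} = 1$ reduces to connectedness of the induced $G_a$-subgraph on $N_{G_a}(x)$, i.e.\ the ``king graph'' on $\{-1,0,1\}^d \setminus \{0\}$, which holds for $d \geq 2$. For the inductive step one deletes from $F$ a vertex $x$ that is $G_a$-adjacent to $X \setminus F$ and whose removal keeps $F$ connected --- such $x$ exists, e.g.\ by taking a lexicographically extremal vertex of $F$ and recursing into one of the pieces it separates if it happens to be a cut vertex --- so that $X \setminus F$ stays connected and the inductive hypothesis applies to $F \setminus \{x\}$; one then checks that the boundary edges at $x$ that are lost, and those that are gained, when passing between $\boundary F$ and $\boundary(F \setminus \{x\})$ remain $H_{\boundary F}$-connected to the rest.

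\textbf{The main obstacle} is precisely this last step, and its heart is the local statement: if $v$ is a boundary vertex and $p, p'$ are two $G_a$-neighbors of $v$ lying on the side of $F$ opposite to $v$, then $\{v,p\}$ and $\{v,p'\}$ are $H_{\boundary F}$-connected \emph{even when $p$ and $p'$ are not $G_a$-adjacent}. One cannot establish this while remaining inside the link of $v$; instead one routes from $p$ to $p'$ along a $G_a$-path in the connected region $X \setminus F$ (or $F$), sliding the moving endpoint one step at a time and repeatedly changing which boundary vertex plays the role of $v$ in order to stay adjacent to $\boundary F$. A cleaner, more conceptual alternative is homological: the clique complex $K$ of $G_a$ restricted to a single orbit is contractible, and if $A$ and $B$ are the closed stars in $K$ of $F$ and of $X \setminus F$ then $A \cup B = K$ while $A \cap B$ is the subcomplex spanned by the $G_a$-cliques that meet both sides; a Mayer-Vietoris argument (using that $A$ and $B$ are connected) shows $A \cap B$ is connected, and since within a single such ``crossing'' clique all of its boundary edges are patently in one $H_{\boundary F}$-class, connectedness of $A \cap B$ propagates this identification across cliques. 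Once $H_{\boundary F}$ is known to be connected, Euler's theorem supplies the Euler cycle.
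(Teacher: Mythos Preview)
Your parity argument for the degrees is correct and essentially identical to the paper's; the two counts $2^{k}3^{d-k}-2$ and $3^{k}2^{d-k}-2$ are the same with the roles of ``zero coordinates'' and ``nonzero coordinates'' interchanged.

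The connectivity argument, however, is not a proof but a plan, and you have correctly identified its weak point. In the inductive approach, the passage from $H_{\boundary(F\setminus\{x\})}$ to $H_{\boundary F}$ requires exactly the ``local statement'' you isolate, and the routing argument you sketch (sliding one endpoint along a path in the complementary region while repeatedly swapping which side carries the pivot vertex) is delicate: at each step you must guarantee that the moving endpoint stays adjacent to a vertex on the \emph{other} side of $\boundary F$, which is not automatic along an arbitrary $G_a$-path in $X\setminus F$ (or in $F$). You have not supplied that verification, nor the check that a suitable non-cut vertex $x$ adjacent to $X\setminus F$ always exists. So as written this is a gap.

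The paper takes a shorter route that avoids induction entirely. It first shows that the $3$-cycles generate the cycle space of $G_a$ (via an explicit spanning tree of $\Z^d$), and then observes that, since $F$ is connected and $\boundaryinfty F=\boundary F$, the edge set $\boundary F$ is a \emph{minimal} cut between any $x\in F$ and any $y\notin F$. It then invokes \cite[Lemma~1]{Ti}: if the cycle space is generated by a family of cycles and a minimal cut is given, any two edges of the cut are linked by a chain of generating cycles each meeting the cut in exactly the shared pair of edges. Specialised to triangles, this says precisely that $H_{\boundary F}$ is connected. Your Mayer--Vietoris alternative is, in fact, the homological shadow of Tim\'ar's lemma: ``$3$-cycles generate the cycle space'' is the hypothesis $H_1=0$ for the $2$-complex built from triangles, and the Mayer--Vietoris conclusion ``$A\cap B$ connected'' is the combinatorial connectivity of the boundary. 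So your second sketch points at the same mechanism the paper uses, but the paper's citation of \cite{Ti} replaces the need to set up the complexes $A$, $B$ and argue about their intersection (which, as you describe it, is not literally a subcomplex). If you want a self-contained argument along your lines, proving the purely combinatorial version in \cite{Ti} is easier and cleaner than either the induction on $\lvert F\rvert$ or the full Mayer--Vietoris machinery.
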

\begin{proof}
  By Euler's theorem, we
  need to show that $H_{\boundary F}$ is connected and every vertex of
  $H_{\boundary F}$ has even degree.

  We begin by showing every edge has even degree.
  Fix $(x,y) \in
  \boundary F$.
  Any $3$-cycle in $G_a$ that contains $(x,y)$ must contain exactly one
  other edge in $\boundary F$. Thus, it suffices to show that
  there are an even number of $3$-cycles in $G_a$ containing the edge $(x,y)$. 
  Let $(x,y) = (x,\gamma \cdot x)$ where $\norm{\gamma}_\infty = 1$.
  Then the number of $3$-cycles containing $(x,y)$ is equal to the number
  of $\delta$ with $\norm{\delta}_\infty = 1$ and $\norm{\gamma - \delta}_\infty =
  1$. If $\gamma = (\gamma_1, \ldots, \gamma_d)$, then this $\delta =
  (\delta_1, \ldots, \delta_d)$ must have $\delta_i \in \{-1,0,1\}$ if
  $\gamma_i = 0$, $\delta_i \in \{0,1\}$ if $\gamma_i = 1$, and $\delta_i
  \in \{-1,0\}$ if $\gamma_i = -1$. Thus, if there are $k$ many values of $i$ such
  that $\gamma_i = 0$, then the number of $\delta$ with this property is
  $3^k 2^{d - k} - 2$, where we subtract $2$ since neither $\delta$ nor
  $\gamma - \delta$ can be equal to $0$. To finish, note that $3^k 2^{d - k} -
  2$ is even since $k < d$. 

  Next, we claim that $H_{\boundary F}$ is connected. To see this will use
  \cite{Ti}. First, we claim that the set of all $3$-cycles in $G_a$
  generates the cycle space of $G_a$. That is, every cycle in $G_a$ is a
  sum of finitely many $3$-cycles, where we add edges in the cycles modulo
  $2$ (see \cite{Ti}). This is easy to see, and we sketch an argument in
  the case where $a$ is the translation action of $\Z^d$ on itself. Let
  $e_i$ be the $i$th element of the usual basis for $\Z^d$. Let $T$ be the
  spanning subtree of $G_a$ where for each $x = (x_1, x_2, \ldots, x_d) \in
  \Z^d$, we have $(x, e_i \cdot x) \in T$ if $x_j = 0$ for all $j > i$.
  Given an edge $(x,y) \in G_a$ such that $(x,y) \notin T$, let $C_{(x,y)}$
  be the unique cycle created by adding $(x,y)$ to $T$. An easy induction
  shows that every such $C_{(x,y)}$ is a sum of $3$-cycles, but clearly any
  cycle in $G_a$ is a sum of cycles of the form $C_{(x,y)}$. 
  
  Since $F$ is connected and $\boundaryinfty F = \boundary F$, if we choose
  any $x \in F$ and $y \notin F$ where $y \in [x]_{G_a}$, then $\boundary
  F$ is a minimal set of edges separating $x$ and $y$ in the sense that any
  proper subset of $\boundary F$ does not separate $x$ and $y$. Hence, by
  \cite[Lemma 1]{Ti}, if $H_{\boundary F}$ had two connected components
  $\Pi_1$ and $\Pi_2$, then there would be some $3$-cycle in $G_a$ that
  intersects both $\Pi_1$ and $\Pi_2$. But this is clearly a contradiction.
  \end{proof}

We are now ready to prove Lemma~\ref{Borel_integral_flow}.

\begin{proof}[Proof of Lemma~\ref{Borel_integral_flow}]
Let $\phi$ be a Borel $f$-flow for $G_a$. Let $C \subset X^{\infty}$ be as
in Theorem~\ref{cake}, with $n = 3$. We use $n = 3$ here for the following
reason: if $R, S \in C$ and we change $\phi$ separately on $\boundary^2 R$
and $\boundary^2 S$ in a such a way that it remains a flow after each
individual modification, then if we combine both modifications the result
will also be a flow. 

We define another Borel $f$-flow $\phi'$ as follows. Let $\phi'(x,y) =
\phi(x,y)$ if $(x,y)$ is not in $\boundary^2 F$ for any $F \in C$. If
$(x,y) \in \boundary^2 F$ for some $F \in C$, then there is a unique such
$F$. Let $((e_1, e'_1, e''_1), \ldots, (e_n,
e'_n, e''_n))$ be the sequence of $3$-cycles in $G_a$ associated to the
lexicographically least Euler cycle of $H_{\boundary F}$, where we are representing each
$3$-cycle by the edges contained in it. An Euler cycle of $H_{\boundary F}$
exists by Lemma~\ref{euler_lemma}.
We may arrange these $3$-cycles so that for all $i$, we have $e_i, e'_i \in
\boundary F$, $e''_i \notin \boundary F$, and $e_i' = e_{i+1}$. Orient the
edges so that $(e_i, e'_i, e''_i) = ((x_i, y_i), (y_i, z_i), (z_i, x_i))$.

Let $\phi^F_0 = \phi$. Given $(e_i, e'_i, e''_i)$, let $\alpha_i =
\phi^F_i(e_i) - \lfloor \phi^F_i(e_i) \rfloor$. Then define
$\phi^F_{i+1}(u,v) = \phi^F_i(u,v)$ if $(u,v)$ is not an edge in the cycle
$(e_i, e'_i, e''_i)$, and otherwise let $\phi^F_{i+1}(u,v) = \phi^F_i(u,v)
- \alpha_i$ if $(u,v)$ is oriented the same direction as the cycle $e_i,
e'_i, e''_i$, and $\phi^F_{i+1}(u,v) = \phi^F_i(u,v) + \alpha_i$ if $(u,v)$
is oriented in the opposite direction. Hence, $\phi^F_{i+1}$ is still an
$f$-flow since we are modifying $\psi^F_i$ only by adding the same amount
to each edge going around a single cycle. Finally, let $\phi'(x,y) =
\phi^F_n(x,y)$. 
  
We claim that if $e \in \boundary F$, then $\phi'(e)$ will be an integer.
First suppose that $e \neq e_n'$. Then if $j$ is the largest number such
that $e = e_j$ (up to direction), we define $\phi^F_j(e)$ to be an integer,
and $e$ cannot equal $e_k'$ for any $k > i$ since $e_k' = e_{k+1}$, hence
$\phi^F_k(e) = \phi^F_j(e)$ for all $k > i$. Thus $\phi'(e)$ is an integer. If $e = e_n'$, then since $f$
is integer valued, the total flow out of $F$ must be an integer. So since
$\phi'$ takes integer values on all the other edges in $\boundary F$,
$\phi'(e)$ must also be an integer.

Note that $|\phi'(x,y) - \phi(x,y)| \leq 3^d-1$ for every edge $(x,y)$.
This is because every node in $H_{\boundary F}$ has degree at most $3^d -
1$, and we change the flow on each edge by at most $1$ as we go around the
Euler cycle. 

Now let $D$ be the set of edges $(x,y)$ in $G_a$ such that $(x,y) \in
\boundary F$ for some $F \in C$. $G_a - D$ has finite connected components,
since $\bigunion C = X$. Let $K$ be a connected component of $G_a - D$. If
$\theta$ is an integer-valued function defined on the edges of $K$, let
$\phi'_{\theta}(x,y) = \theta(x,y)$ if the edge $(x,y)$ is in $K$, and
$\phi'(x,y)$ otherwise. By Corollary~\ref{bounded_int_flow}, there is a
integer valued function $\theta$ defined on the edges of $K$ so that
$\phi'_{\theta}(x,y)$ is an $f$-flow, and $|\phi'_{\theta} - \phi'| < 1$. 

Let $\psi$ be the Borel $f$-flow on $G_a$ defined as follows. Let $\psi(x,y) =
\phi'(x,y)$ if $(x,y) \in D$. If $(x,y) \notin D$, then let $K$ be the
connected component of $G_a - D$ containing $(x,y)$, let $\theta$ be the
lexicographically least integer-valued function on the edges of $K$ such that
$\phi'_{\theta}$ is an $f$-flow where $|\phi'_{\theta} - \phi'| < 1$,
and let $\psi(x,y) = \phi'_{\theta}(x,y)$.
\end{proof}

\section{Proof of Theorem~\ref{main_thm}}\label{main_proof_section}

In order to prove Theorem~\ref{main_thm} using the fewest number of pieces in our
equidecomposition, we use the following lemma due to Gao and Jackson, which
was an important ingredient in their proof that Borel actions of abelian
groups are hyperfinite. In
Remark~\ref{voronoi}, we describe how one can prove Theorem~\ref{main_thm}
without using this black box. 

\begin{lemma}[\cite{GJ}]\label{rect_tiling}
  Suppose $a \from \Z^d \actson X$ is a free Borel action of $\Z^d$ on a standard
  Borel space $X$ and $n > 0$. Then
  there is a Borel set $C \subset [X]^{< \infty}$ such that $C$ partitions
  $X$ and every $S \in
  C$ is a set of the form $\{(n_1,\ldots, n_d) \cdot x : 0 \leq
  n_i < N_i\}$ for some $x \in X$ and sequence $N_1, \ldots, N_d$ where
  $N_i = n$ or $N_i = n+1$.
\end{lemma}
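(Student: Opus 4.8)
The plan is to prove Lemma~\ref{rect_tiling} by bootstrapping from the one-dimensional case, handling each coordinate direction in turn. The underlying principle is that a free Borel $\Z^d$-action can be analyzed by looking at the $\Z$-subactions generated by each standard basis vector $e_1, \ldots, e_d$.

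First I would treat the case $d = 1$. Here $a \from \Z \actson X$ is free, so $G_a$ is a disjoint union of bi-infinite lines. Fix $n > 0$; the goal is to partition $X$ into Borel blocks each of the form $\{x, e_1 \cdot x, \ldots, (N-1)e_1 \cdot x\}$ with $N \in \{n, n+1\}$. This is a standard "marker" argument: using the hyperfiniteness / Borel combinatorics of $\Z$-actions one can first produce a Borel set $M \subset X$ that is "$2n$-syndetic and $2n$-separated" — that is, $M$ meets every orbit, consecutive points of $M$ along each line are at distance between $2n$ and $4n$ (say), and $M$ is Borel. (Such marker sets are obtained by a greedy Borel transversal-type construction; concretely one can take a maximal Borel $2n$-separated set, which exists by the usual maximal-independent-set argument for Borel graphs of bounded degree.) Between two consecutive markers the segment has length between $2n$ and $4n$, and any integer in $[2n, 4n]$ can be written as a sum of terms each equal to $n$ or $n+1$ (since the integers expressible as such sums are all of $\{kn, kn+1, \ldots, k(n+1)\} = [kn, kn+k]$ for $k \geq 1$, and for $k = 2$ this is exactly $[2n, 2n+2]$, for $k=3$ it is $[3n, 3n+3]$, and these intervals cover everything from $2n$ onward once $n \geq 2$; the small cases $n = 1$ are handled directly). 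So we subdivide each inter-marker segment canonically (e.g. put all the $(n+1)$-blocks first, then the $n$-blocks, in a way determined Borel-measurably by the left marker) and obtain the desired Borel partition of $X$ into intervals of length $n$ or $n+1$ in the $e_1$-direction.

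Next I would run an induction on $d$. Suppose the result holds for $\Z^{d-1}$; given a free Borel $a \from \Z^d \actson X$, first apply the $d=1$ construction to the $\Z$-subaction generated by $e_d$. This partitions $X$ into Borel "columns" of the form $\{x, e_d \cdot x, \ldots, (N-1) e_d \cdot x\}$ with $N \in \{n, n+1\}$. Let $Y \subset X$ be the Borel set of bottom elements of these columns (the $<$-least, or the unique $x$ with $e_d^{-1} \cdot x$ not in the same column); then the $\Z^{d-1}$-action on $X$ generated by $e_1, \ldots, e_{d-1}$ restricts to a free Borel action of $\Z^{d-1}$ on $Y$ once we note that $Y$ is invariant under that subaction — which requires a small argument, and is in fact the main subtlety. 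The natural fix is to build the $e_d$-partition so that it is itself invariant under the $e_1, \ldots, e_{d-1}$ subaction, i.e. so that whether $x$ is a column-bottom depends only on its $e_d$-coordinate within its orbit in a way uniform across the transverse directions. One arranges this by choosing the marker set $M$ in the $e_d$-direction to be $\Z^{d-1}$-invariant: run the greedy/maximal construction not on single points but equivariantly, e.g. pick a Borel linear order, and at stage controlled by distance along $e_d$ decide membership using only data invariant under $e_1, \ldots, e_{d-1}$. (Equivalently: quotient by the $\Z^{d-1}$-subaction abstractly, do the $d = 1$ argument on the quotient — which is not standard Borel, so really one does it equivariantly on $X$.) Granting this, $Y$ is a $\Z^{d-1}$-invariant Borel set meeting each orbit, the induction hypothesis gives a Borel partition of $Y$ into $(d-1)$-dimensional boxes with side lengths in $\{n, n+1\}$, and crossing each such $(d-1)$-box with the column heights above it yields a partition of $X$ into $d$-dimensional boxes with all side lengths in $\{n, n+1\}$, as required.

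The main obstacle is exactly this equivariance issue in the inductive step: naively, the set of column-bottoms need not be closed under the transverse $\Z^{d-1}$-action, because the $d=1$ marker construction applied orbit-line by orbit-line makes independent choices in parallel lines. The remedy — making the first-coordinate (here $e_d$) decomposition itself $\Z^{d-1}$-equivariant — is what makes the induction go through, and it is the step I would write out most carefully; everything else (existence of maximal separated Borel sets, the arithmetic of writing lengths as sums of $n$'s and $(n+1)$'s, assembling boxes) is routine. Alternatively, one can cite that this is precisely the content of the relevant construction in \cite{GJ} and simply invoke it, which is what the statement of the lemma does.
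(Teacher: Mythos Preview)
The paper does not prove this lemma at all: it is stated with a citation to \cite{GJ} and used as a black box, with Remark~\ref{voronoi} explaining how to avoid it entirely. So there is no ``paper's own proof'' to compare against; the relevant question is whether your sketch actually works.

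It does not, and the gap is precisely the equivariance step you flag as the main subtlety. You propose to make the $e_d$-marker set $M$ invariant under the $\Z^{d-1}$-subaction generated by $e_1,\dots,e_{d-1}$, so that the set $Y$ of column bottoms is $\Z^{d-1}$-invariant and the induction can proceed. But in general no such Borel $M$ exists. Take $d=2$ and let $\Z^2$ act freely on $\T^1$ by $e_1 \cdot x = x+\alpha$, $e_2 \cdot x = x+\beta$ with $1,\alpha,\beta$ rationally independent. Suppose $M\subset\T^1$ is Borel, $\alpha$-invariant, and meets every $e_2$-orbit with consecutive gaps in $\{n,n+1\}$, $n\ge 2$. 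By ergodicity of the $\alpha$-rotation, $\lambda(M)\in\{0,1\}$. If $\lambda(M)=1$ then some $x$ has $x,x+\beta\in M$, giving a gap of $1$. If $\lambda(M)=0$, let $h(x)=\min\{k\ge 0: x+k\beta\in M\}$; this is Borel, $\alpha$-invariant, and takes values in $\{0,\dots,n\}$, hence is a.e.\ equal to some constant $c$. Since $\lambda(M)=0$ we have $c\ge 1$, but then $\{x:h(x)=c\}\subset M-c\beta$ is null, contradicting that $h=c$ almost everywhere. So the $\Z^{d-1}$-equivariant column decomposition you need simply fails to exist.

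Your parenthetical---``quotient by the $\Z^{d-1}$-subaction \dots\ which is not standard Borel, so really one does it equivariantly on $X$''---correctly locates the obstruction but does not overcome it: there is no Borel way to run the one-dimensional greedy marker argument on a non-smooth quotient. The actual proof in \cite{GJ} avoids strict equivariance altogether; it builds the rectangular tiles by an ``orthogonal marker'' construction in which markers in different coordinate directions are allowed to interact in a controlled but non-invariant way, with careful adjustments at each stage. That machinery is substantially more delicate than the induction you outline, which is why the present paper treats the lemma as a black box (and offers the Voronoi-cell alternative in Remark~\ref{voronoi}).
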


Roughly, the above lemma states that there is a Borel tiling of the action
$a$ using rectangles each of whose side lengths is $n$ or $n+1$.

\begin{proof}[Proof of Theorem~\ref{main_thm}.]
  As discussed at the beginning of Section~\ref{sec:defn}, we may assume
  that $A, B \subset \T^k$.
  Let $d \geq 2$ be such that $d > 2k/(k - \Delta(\boundary A))$ and pick
  $\bfu \in (T^k)^d$ such that the action $a_{\bfu}$ is free and satisfies
  Lemma~\ref{L_discrep} for both sets $A$ and $B$. Hence, there is some $M$ and
  $\epsilon > 0$ such that 
  \[D(F_{N}(x,a_{\bfu}),A) \leq M N^{-1 - \epsilon}\quad \text{ and } \quad
  D(F_{N}(x,a_{\bfu}),B) \leq M N^{-1 - \epsilon}\]
  for every $x \in \T^k$ and $N > 0$.

  Now consider the graph $G_{a_\bfu}$ and the function $f = \chi_A -
  \chi_B$, the difference between the characteristic functions of $A$ and
  $B$. Since $\lambda(A) = \lambda(B)$, by the definition of discrepancy
  and $f$, for every $x \in \T^k$ and $n > 0$, 
  \[\left | \sum_{F_{2^n}(x,a_{\bfu})} f \right| = 2^{nd} \left|
  D(F_{2^n}(x,a_{\bfu}),A) - D(F_{2^n}(x,a_{\bfu}),B) \right| \leq 2M 2^{n(d - 1 -
  \epsilon)}.
  \]
  Thus, by Lemma~\ref{flow_lemma}, letting $\Phi(2^n) = M 2^{n(d - 1 -
  \epsilon)+1}$, there is a bounded Borel $f$-flow for the graph
  $G_{a_{\bfu}}$. 
  By Lemma~\ref{Borel_integral_flow}, there is a bounded integral Borel
  $f$-flow for the graph $G_{a_\bfu}$. Call this integral Borel flow $\psi$, and
  suppose that $\psi$ is bounded by the constant $c$. 
  
  For each $n$, by Lemma~\ref{rect_tiling}, let $C_n \subset (\T^k)^{<
  \infty}$ be a Borel tiling of the action $a_\bfu$ by rectangles of side lengths
  $n$ or $n+1$. For each $x \in \T^k$, let $V_n(x)$ be the unique element of $C_n$
  that contains $x$. Now for every $x \in \T^k$, $|\boundary V_n(x)| \leq 2d \cdot
  3^d \cdot (n+1)^{d-1}$
  which is $O(n^{d-1})$.
  Next, since there is some $x' \in V_n(x)$ such that $F_n(x',a_\bfu)
  \subset V_n(x)$, we have that
  $|A \inters V_n(x)| \geq \lambda(A)n^d - M n^{d - 1 - \epsilon}$. 
  Hence, there is some $K$ so that 
  \[c |\boundary V_K(x)| \leq |A \inters V_K(x)| \quad \text{ and } \quad
  c |\boundary V_K(x)| \leq |B \inters V_K(x)| \]
  for every $x \in \T^k$. Fix this $K$ and let $C = C_K$.

  For each $R \in C$, let $N(R)$ be the set of $S \in C$ such that $S \neq
  R$ and $\boundary S \inters \boundary R \neq \emptyset$. Note that $N(R)$ is finite.
  Given $S \in N(R)$, let 
  \[\Psi(R,S) = \sum_{\{(x,y) : x \in R \land y \in S\}} \psi(x,y) \]
  so $\Psi(R,S)$ is integer-valued, $\Psi(R,S) = - \Psi(S,R)$ and
  \[\sum_{S \in N(R)} \Psi(R,S) = |R \inters A| - |R \inters B|.\]
  Note also that $\sum_{S \in N(R)} |\Psi(R,S)|$ is 
  less than $|R \inters A|$ and $|R \inters B|$ by our choice of $K$ and if 
  $S \in
  N(R)$, then for any $x \in R$ and
  $y \in S$, there is a $\gamma \in \Z^d$ with $\norm{\gamma}_\infty < 2K + 4$ such that
  $\gamma \cdot x = y$. 

  Essentially, if we let $G_C$ be the graph with vertex set $C$ where $R$
  is adjacent to $S$ if $\boundary R \inters \boundary S$, then $\Psi$ is a
  flow on this graph for the function $f(R) = \sum_{R} \chi_A - \chi_B$,
  and $N$ is the neighborhood relation on this graph. 

  To show that $A$ and $B$ are $a_{\bfu}$-equidecomposable
  using Borel pieces, it suffices to construct a Borel bijection $g \from A
  \to B$ such that for all $x \in A$, $g(x) = \gamma \cdot x$ for some
  $\gamma$ such that $\norm{\gamma}_\infty < 2K + 4$. Then the pieces in
  our equidecomposition will be $\{x \in A : g(a) = \gamma \cdot x\}$ for
  each $\gamma$ with $\norm{\gamma}_\infty < 2K + 4$. 

  Our idea for constructing $g$ is that for
  every $R, S \in C$, if $\Psi(R,S) > 0$ we should map $\Psi(R,S)$ many
  points from $A \inters R$ to point of $B \inters S$. After doing this for
  all pairs $R,S$, there will be an equal number of points of $A$ and $B$
  left in each $R \in C$, so we map the remaining points in $A \inters R$
  to the points of $B \inters R$. 
  
  Fix a Borel linear ordering $<_C$ of $C$, and a Borel linear ordering $<$
  of $\T^k$. For each $R \in C$, inductively let $A(R,S) \subset A \inters
  R$ be the least $\Psi(R,S)$ many elements of $A \inters R$ that are
  not in $A(R,S')$ for any $S' \in N(R)$ where $S' <_C S$. Similarly, for
  each $R \in C$, let $B(R,S) \subset B \inters R$ be the first
  $\Psi(R,S)$ many elements of $B \inters R$ that are not in $B(R,S')$ for
  any $S' \in N(R)$ with $S' <_C S$. Finally, let $A'(R) = A \inters R
  \setminus \bigunion_{\{S \in N(R) : \Psi(R,S) > 0\}}A(R,S)\}$ and $B'(R)
  = B \inters R \setminus \bigunion_{\{S \in N(R) : \Psi(R,S)\}}B(R,S)$. By
  the properties of $\Psi$ listed above, $|A'(R)| = |B'(R)|$ for every $R
  \in C$. Define $g \from A \to B$ as follows. Given $x \in A$, let $R$ be
  the unique element of $C$ containing $x$.
  If there is some $S \in N(R)$ such that $x \in A(R,S)$, and $x$ is
  the $l$th-least element of $A(R,S)$, then let $g(x)$ be the $l$th-least
  element of $B(S,R)$. If not, then $x \in A'(R)$. If $x$ is the
  $l$th-least element of $A'(R)$, then let $g(x)$ be the $l$th-least
  element of $B'(R)$.
\end{proof}

\begin{remark}\label{voronoi}
  We sketch an alternate proof of Theorem~\ref{main_thm} without using
  Lemma~\ref{rect_tiling}.
  By Theorem~\ref{max_indep}, for each $n$, there
  is a Borel maximal $n$-discrete set $C_n$ for $G_{a_{\bfu}}$. Note that
  the $n/2$-balls around points in $C_n$ are pairwise disjoint. 
  Given $x
  \in C_n$, let $V_n(x)$ be the \define{Voronoi cell} determined by the seed $x$ in
  the graph $G_{a_\bfu}$. That is, $V_n(x)$ is the set of $y \in \T^k$ such that
  $x$ is the $<$-least element of $V_n$ such that $d(y,x) \leq d(y,z)$ for
  all $z \in \T^k$. Note that $\{V_n(x) : x
  \in C_n\}$ is a Borel partition of $\T^k$.
  Since the $n/2$-balls around points in $C_n$ are pairwise disjoint, every
  set $V_n(x)$ contains a set of the form $F_n(x',a_\bfu)$ (where $x'$ is a point of
  distance $n/2$ from $x$).

  Next, we compute an upper bound on the size of $\boundary V_{n}(x)$.
  Fix $x \in X$. The $3n$-ball around $x$ has size $(6n+1)^d \leq (7n)^d$.
  Since the $n/2$-balls around points in $B$ are disjoint and have size $(n
  + 1)^d \geq n^d$, there are at most $(7n)^d / n^d = 7^d$ many points $y$ of
  distance $\leq 2n$ from $x$, since the $n/2$-ball around $y$ must be
  contained in the $3n$-ball around $x$. Now given any two points $x, y \in
  B$ such that $d(x,y) \leq 2n$, the set of $z$ such that $d(x,z) = d(y,z)
  \leq n$ has size $O(n^{(d-1)})$. Since there are at most $7^d$ such $y
  \in B$ in the $2n$-ball around $x$, the boundary of $V_{n}(x)$ has size
  $O(n^{(d-1)})$. Thus, we can find some $K$ such that for every $x \in X$,
  \[ c |\boundary V_{K}(x)| \leq  |V_{K}(x) \inters A| \quad \text{ and } \quad 
  c |\boundary V_{K}(x)| \leq |V_{K}(x) \inters B| \]
  Now finish as before using these Voronoi cells $\{V_{K}(x) : x \in C_K\}$
  instead of the Gao-Jackson tiling.
\end{remark}

\section{The Borel complexity of our equidecompositions}
\label{complexity_section}

In this section, we make some remarks about the complexity of the
Borel pieces used in the proof of Theorem~\ref{main_thm}. The task of
computing these complexities is standard, and we merely sketch a outline of
the argument to show the pieces are $\boolH^{A,B}_4$.  Fix $A, B \subset
\T^k$ and the action $a_{\bfu} \from \Z^d \actson \T^k$ from the proof of
Theorem~\ref{main_thm}. We begin with a remark we will use several times
when computing complexities.

\begin{remark}\label{complexity_remark}
Suppose $C \subset \T^k$ is defined in terms of some sets $D_1, \ldots, D_n
\subset \T^k$. If there is some $m$ and a deterministic algorithm which
decides if $x \in C$ based on inspecting what vertices of the $m$-ball
around $x$ in the graph $G_{a_\bfu}$ lie in $D_1, \ldots, D_n$, then 
$C$ is a finite boolean combination of the sets $g \cdot D_i$
where $\norm{g}_\infty \leq k$ and $1 \leq i \leq n$.
Hence, if $D_1, \ldots, D_n \in \boolH^{A,B}_m$, then we also
have $C \in \boolH^{A,B}_m$. 
\end{remark}

In the cases where we apply Remark~\ref{complexity_remark} to compute
complexities, such an algorithm will be clear from the proof where we construct
the corresponding set. 

We now proceed to calculate the complexity of the sets at each step in our
argument. Recall from Section~\ref{flow_section} that 
\[\psi(x,\gamma \cdot x) = \sum_{n > 0} \frac{1}{2^{nd}} \sum_{h \in \Z^d/(2^n
\Z)^d} \phi_{x,h}(x,\gamma \cdot x) - \phi_{x,h}(x,\gamma \cdot x).\]
Let $\psi_k(x,\gamma \cdot x)$ be the first $k$ many terms of this
summation, so 
\[\psi_k(x,\gamma \cdot x) = \sum^k_{n = 1} \frac{1}{2^{nd}} \sum_{h \in \Z^d/(2^n
\Z)^d} \phi_{x,h}(x,\gamma \cdot x) - \phi_{x,h}(x,\gamma \cdot x).\]
Then it is clear that $\psi_k$ can take only finitely many 
rational values, and for each $\gamma \in \Z^d$ with $|\gamma|_\infty = 1$,
and every possible value $a$, by Remark~\ref{complexity_remark},
\[\{x \colon \psi_k(x,\gamma \cdot x) = a\} \in \boolH^{A,B}_1.\]
Define 
\[\epsilon_k = \frac{1}{2^{d-1}} \sum_{n = k-1}^\infty
\frac{\Phi(2^n)}{2^{n(d-1)}}\] 
to be the tails of the summation defining
the constant $c$ in Lemma~\ref{flow_lemma}, where $\Phi$ is as in the
proof of Theorem~\ref{main_thm}. Then $|\psi_k(x,y) - \psi(x,y)| <
\epsilon_k$ for every edge $(x,y)$ in $G_{a_{\bfu}}$. Hence,
for each real number $a$,  
\[\{x \colon \psi(x,\gamma \cdot x) < a\} \in \bfSigma^{A,B}_2\]
since $x$ is in this set if and only if $\psi_k(x,\gamma \cdot x) +
\epsilon_k < a$ for some $k$. 
Indeed, for any finite sequence $(g_1, \gamma_1), \ldots, (g_n,\gamma_n)$,
where $g_i, \gamma_i \in \Z^d$ and $|\gamma_i|_\infty = 1$, we have that
\[\left\{x \colon \left(\sum_{i = 1}^n \psi(g_i \cdot x,\gamma_i \cdot (g_i
\cdot x)) \right) < a\right\} \in \bfSigma^{A,B}_2\]
by the same argument.

In Section~\ref{sec:defn}, we discussed how at some points in our proof, we
use a Borel linear ordering on $\T^k$ to make an arbitrary choice
in our construction. In order to obtain pieces in our decomposition with
the lowest possible complexity, we need to use a particular ordering.
Since the comparisons we make in our proof are only between points
in the same orbit of $a_{\bfu}$, it is enough to have a Borel partial order
which is linear on each orbit. So given $x, g \cdot x$ in the same orbit of
$a_{\bfu}$, define $x < g \cdot x$ if $g$ is greater than $0$ in the
lexicographic ordering on elements of $\Z^d$. We use this ordering because
it combines well with Remark~\ref{complexity_remark}.

Next, we consider the proof of Theorem~\ref{cake} in
Appendix~\ref{cake_appendix}. The maximal $r_i$-discrete sets constructed
after our statement of Theorem~\ref{max_indep} are $\boolH^{A,B}_1$. It is
straightforward to see that the sets $C_i$ constructed by
Lemma~\ref{accumulation_lemma} are each $\boolH^{A,B}_3$; the 
complicated part of their definition is finding the least $g$ such that
$D^*(x) \inters (g + [0,1]^d)$ is nonempty. 
Hence, for the sets $D_i$ constructed in the proof of Theorem~\ref{cake},
for any $g_1, \ldots, g_n \in \Z^d$,
\[\{x \in \T^k : \{g_1 \cdot x, \ldots, g_n \cdot x\} \in D_i\} \in
\boolH^{A,B}_3 \]
by Remark~\ref{complexity_remark}. Note that for each $i$, there is an upper bound on the
size of all elements of $D_i$. 

Now consider Lemma~\ref{Borel_integral_flow} where we turn our real-valued
flow $\psi$ into an integral Borel flow, which we will call $\psi'$. (Our
variable choices here differ from the $\phi$ and $\psi$ in the statement of
Lemma~\ref{Borel_integral_flow}). We claim that for each $i$, letting $D_i$ be as in the proof of
Theorem~\ref{cake} as above, for every $m$,
\[\{x \in \T^k: x \in \bigunion D_i \land \psi'(x,\gamma \cdot x) = m\} \in
\boolH^{A,B}_3.\]
This is by combining the fact that for each $i$, the elements of $D_i$ 
have bounded size with the fact proved above that
$\left\{x \colon \left(\sum_{i = 1}^n \psi(g_i \cdot x,\gamma_i \cdot (g_i
\cdot x)) \right) < a\right\} \in \bfSigma^{A,B}_2$, 
and then using Remark~\ref{complexity_remark}.
Thus, taking the union of these $\boolH^{A,B}_3$ sets, we see that for
every $m$,
\[\{x \in \T^k: \psi'(x,\gamma \cdot x) = m\} \in
\bfSigma^{A,B}_4.\]
in our integral Borel flow $\psi'$.

Finally, consider the argument in Section~\ref{main_proof_section} which
uses Remark~\ref{voronoi} to define the equidecomposition. Since our
equidecomposition just uses the value of $\psi'$ (whose complexity has been
computed above), and a maximal $K$-discrete set which is $\boolH^{A,B}_1$,
the resulting pieces will be $\boolH^{A,B}_4$ by
Remark~\ref{complexity_remark}.
(Similarly, inspecting the proof of Lemma~\ref{rect_tiling} in \cite{GJ}
also yields pieces with the same complexity at this last step). 

\begin{appendices}
\section{Proof of Theorem~\ref{cake}}
\label{cake_appendix}

In this appendix, we give a proof of Theorem~\ref{cake}. If $d$ is a metric
on a set $X$, then we say that $Y \subset X$ is \define{$r$-discrete (with
respect to $d$)} if $d(x,y) > r$ for all distinct $x, y \subset Y$. We
further say that $Y \subset X$ is a \define{maximal $r$-discrete} set if
$Y$ is $r$-discrete and for every $x \in X$ there is a $y \in Y$ with
$d(x,y) \leq r$. If $G$ is a graph on $X$ then by an $r$-discrete set of
vertices in $X$, we mean with respect to the graph metric $G$ induces on
$X$. 

We will need the following theorem of Kechris, Solecki, and Todorcevic.
\begin{thm}[{\cite[Theorem 4.2]{KST}}]\label{max_indep}
  Every locally finite Borel graph $G$ has a maximal $r$-discrete Borel
  set. 
\end{thm}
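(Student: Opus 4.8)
The plan is to reduce to the case $r = 1$ and then build a maximal independent Borel set by a greedy recursion of length $\omega$ along a countable Borel coloring of $G$.

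First I would reduce to $r=1$. Given $G$ on $X$ and $r \geq 1$, let $G^{\leq r}$ be the graph on $X$ in which $x$ and $y$ are adjacent iff $1 \leq d_G(x,y) \leq r$. Since $G$ is locally finite, each vertex has only finitely many vertices within $G$-distance $r$, so $G^{\leq r}$ is again locally finite; and since ``$d_G(x,y) \leq r$'' is an analytic relation with finite vertical sections, it is Borel by the Luzin--Novikov theorem, so $G^{\leq r}$ is a locally finite Borel graph. A subset of $X$ is $r$-discrete in $G$ iff it is $G^{\leq r}$-independent, and maximal $r$-discrete iff it is a maximal independent set in $G^{\leq r}$. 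So it suffices to prove: every locally finite Borel graph has a maximal independent Borel set.

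Second, I would produce a countable Borel coloring of $G$, i.e. a partition $X = \bigunion_{n \in \N} A_n$ into pairwise disjoint Borel $G$-independent sets. This is where the descriptive content sits: writing $G \subset X \times X$ as a countable union of graphs of Borel partial functions via Luzin--Novikov (possible since each vertical section of $G$ is finite) and running the standard $\omega$-step recursion yields such a partition; for a proposal I would cite this rather than reprove it. I would also record that for any Borel $Y \subset X$ the neighborhood $N_G(Y) = \{x : \exists y \in Y,\ (x,y) \in G\}$ is Borel --- again by Luzin--Novikov, as an analytic set with finite sections (equivalently, a countable union of Borel preimages under the functions just mentioned).

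Third, the greedy recursion. Set $I_0 = A_0$ and recursively $I_{n+1} = I_n \cup (A_{n+1} \setminus N_G(I_n))$; each $I_n$ is Borel by the previous paragraph, hence so is $I = \bigunion_n I_n$. For independence: if $x, y \in I$ were adjacent with $x \in A_m$ and $y \in A_n$, then $m \neq n$ since each $A_i$ is independent; say $m < n$. Since $I_0 = A_0$ and $I_{k+1} \setminus I_k \subset A_{k+1}$, an $A_m$-element can enter $I$ only at stage $m$, so $x \in I_m \subset I_{n-1}$; then $y \in N_G(I_{n-1})$, so $y \notin A_n \setminus N_G(I_{n-1})$, contradicting that stage $n$ is the only stage that could place $y$ into $I$. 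For maximality: let $x \in X \setminus I$ with $x \in A_m$. If $m = 0$ then $x \in A_0 = I_0 \subset I$, a contradiction, so $m \geq 1$; since $x \notin I \supseteq I_m$ and $I_{m-1} \subset A_0 \cup \dots \cup A_{m-1}$ is disjoint from $A_m$, the only way $x \notin I_m = I_{m-1} \cup (A_m \setminus N_G(I_{m-1}))$ is $x \in N_G(I_{m-1})$, i.e. $x$ has a neighbor in $I$. Hence $I$ is a maximal independent Borel set, and pulling it back through the reduction in the first step gives the desired maximal $r$-discrete Borel set.

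The main obstacle is the second step: producing the countable Borel coloring, together with the attendant Borelness claims (that $G^{\leq r}$ is Borel and that $Y \mapsto N_G(Y)$ preserves Borel sets), all of which rest on the Luzin--Novikov uniformization theorem for Borel sets with countable sections. Once that machinery is in place, the construction is a routine $\omega$-stage recursion and the independence and maximality verifications are bookkeeping.
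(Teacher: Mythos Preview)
Your proposal is correct and gives the standard general proof of the KST result: reduce to a maximal independent set in $G^{\leq r}$, produce a countable Borel coloring via Luzin--Novikov, and run a greedy recursion of length $\omega$. The independence and maximality verifications are fine.

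Note, however, that the paper does not actually prove the general theorem---it simply cites \cite{KST}. What the paper does supply is a short self-contained argument in the special case of the graph $G_{a_{\bfu}}$ on the torus $\T^k$, and that argument is different in flavor from yours. There the authors exploit compactness: for small enough Euclidean radius $\epsilon$, each $\epsilon$-ball in $\T^k$ is already $r$-discrete for $G_{a_{\bfu}}$, so one covers $\T^k$ by \emph{finitely} many such balls $B_0,\ldots,B_n$ and runs a greedy recursion of \emph{finite} length, setting $C_{i+1} = B_{i+1} \setminus \bigunion\{g \cdot C_j : j \leq i,\ \norm{g}_\infty \leq r\}$. The structure is the same greedy idea as yours, but the partition into independent pieces comes for free from compactness, so no Luzin--Novikov or countable-coloring machinery is needed. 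Your argument buys full generality; the paper's buys elementarity and also keeps the complexity low (the resulting set is visibly $\boolH^{A,B}_1$, which is used in Section~\ref{complexity_section}).
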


In the specific case of the graph $G_{a_{\bfu}}$ we can give a short proof
of this fact. Given any $r >
0$, for sufficiently small $\epsilon > 0$, the $\epsilon$-ball
around any point $x \in \T^k$ will be an $r$-discrete
Borel set. Thus, we can find a maximal $r$-discrete Borel set $C$ for
$G_{a_{\bfu}}$ as follows. Let $B_0, B_1, \ldots, B_{n}$ be finitely many
$\epsilon$-balls which cover $\T^k$. Define $C_{0} = B_0$ and 
let $C_{i+1} = B_{i+1} \setminus \bigunion \{g \cdot C_j : j \leq i \land
|g|_\infty \leq r\}$. So inductively, $\bigunion_{i \leq k} C_i$ is an
$r$-discrete set
and for every $x \in B_k$, there is a $y \in C_k$ such that $d(x,y) \leq
r$. Now finish by letting $C = \bigunion_{i \leq n} C_i$. 

Next, we need the following lemma which is a rephrasing of an idea
due to Boykin and Jackson. They used it to give a new
proof of the theorem originally due to Weiss that free Borel actions of
$\Z^d$ are hyperfinite. 

\begin{lemma}[\cite{BJ}]\label{accumulation_lemma}
  Suppose $d \geq 1$, $n > 0$, $a \from \Z^d \actson X$ is a free Borel
  action of $\Z^d$ on a standard Borel space $X$, and 
  $r_0 < r_1 < \ldots$ is an increasing sequence of natural numbers. Then
  there is a sequence $C_{0}, C_{1}, \ldots \subset X$ of Borel sets such
  that $C_{i}$ is Borel maximal $r_i$-discrete set for $G_a$, and 
  for every $\epsilon > 0$ and every $x \in X$, there are infinitely many
  $i$ such that $d(x,C_i) < \epsilon r_i$.
\end{lemma}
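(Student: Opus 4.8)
The plan is to build the sets $C_i$ one at a time by recursion on $i$, at each stage invoking Theorem~\ref{max_indep} to get a maximal $r_i$-discrete Borel set, but applying it not to $G_a$ directly but to a modified graph that encodes the constraint forcing the desired ``accumulation'' property. The key observation of Boykin--Jackson is that maximality of an $r_i$-discrete set already guarantees that every point is within $r_i$ of a seed; what one needs in addition is that, for a fixed point $x$, the ratio $d(x,C_i)/r_i$ is small for infinitely many $i$ — and this can be arranged by making the later $C_j$'s ``remember'' the earlier ones. Concretely, I would split $\N$ into infinitely many infinite blocks and handle the blocks independently, or equivalently, at stage $i$ I would choose $C_i$ to be a maximal $(r_i$-discrete) set that also contains, or lies close to, a previously fixed reference set. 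The cleanest implementation: fix once and for all a single Borel maximal $1$-discrete set $D$ for $G_a$ (using Theorem~\ref{max_indep}), and at stage $i$ take $C_i$ to be a maximal $r_i$-discrete Borel subset of... no — one cannot take a discrete \emph{subset} of $D$ and keep maximality. So instead I would take $C_i$ to be obtained by the greedy construction sketched after Theorem~\ref{max_indep}, but seeded so that it contains a point within a bounded distance of each point of $D$; since $D$ is $1$-net for the whole space and the mesh $r_i\to\infty$, for any $\epsilon>0$ eventually $1 < \epsilon r_i$, and then being within $r_i$ (or $O(1)$) of a point of $D$, hence within $O(1)\le \epsilon r_i$ of $C_i$, gives the conclusion for \emph{all} large $i$, in particular infinitely many.

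More carefully, here is the step order I would follow. First, apply Theorem~\ref{max_indep} to $G_a$ with $r=1$ to obtain a Borel maximal $1$-discrete set $D\subset X$; so every $x\in X$ has $d(x,D)\le 1$. Second, for each $i$ construct $C_i$ as a Borel maximal $r_i$-discrete set by the greedy procedure: cover $X$ by countably many Borel pieces on each of which the $\epsilon$-ball (for small $\epsilon$) is $r_i$-discrete — in the general Borel setting one uses instead a countable Borel colouring of a suitable power graph, guaranteed by Theorem~\ref{max_indep}-type arguments — enumerate these pieces, and at each step throw in new points while deleting the $r_i$-neighbourhood of what has been chosen so far, but giving \emph{priority} to points of $D$: that is, first run the greedy process on $D\inters(\text{current piece})$, then on the rest. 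The result $C_i$ is still Borel, still $r_i$-discrete, still maximal, and has the extra feature that $d(x,C_i)\le r_i+1$ for every $x\in D$ — actually one gets $d(x,C_i)\le r_i$ since the only way $x\in D$ fails to be in $C_i$ is that some point of $C_i$ already lies within $r_i$ of it. Third, chain the estimates: given arbitrary $y\in X$ and $\epsilon>0$, pick $x\in D$ with $d(y,x)\le 1$; then $d(y,C_i)\le d(y,x)+d(x,C_i)\le 1+r_i$... which is not $<\epsilon r_i$.

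So the naive bound is off by the additive $r_i$, and this is the main obstacle: maximality only gives distance $\le r_i$ to a seed, never $\ll r_i$, so one genuinely cannot get the accumulation property from a \emph{single} discrete set — the point of the lemma is that different scales $r_i$ must be coordinated. The fix, and the real content, is to make $C_{i+1}$ contain actual points of $C_i$: at stage $i+1$, when running the greedy construction for the mesh $r_{i+1}$, give top priority to the points of $C_i$. Since $C_i$ is $r_i$-discrete and $r_{i+1}>r_i$, not all of $C_i$ can survive into $C_{i+1}$, but a point $p\in C_i$ that is \emph{discarded} is discarded because some already-chosen point of $C_{i+1}$ lies within $r_{i+1}$ of it — that still gives $d(p,C_{i+1})\le r_{i+1}$, no gain. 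The correct move is the standard Boykin--Jackson trick: work along a rapidly growing sequence, say $r_{i+1}\ge 2^{i}r_i$ after passing to a subsequence (note the lemma only asserts ``infinitely many $i$'', so we are free to thin out), and ensure $C_{i}\subset C_{i+1}\subset C_{i+2}\subset\cdots$ fails — rather, ensure that $C_j$ for $j>i$ always contains \emph{some} point within distance $r_i$ of each point of $C_i$, so that inductively each $x\in X$ stays within $r_i + r_{i-1} + \cdots = O(r_i)$ of $C_j$ for all $j\ge i$; then choosing $i$ with $r_{i-1}+r_{i-2}+\cdots < \epsilon r_i$ (possible by rapid growth) and noting $x$ is within $r_i$ of $C_i$ hence within $r_i(1+o(1))$... still $\Theta(r_i)$, not $o(r_i)$. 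I expect the actual argument in \cite{BJ} instead fixes, for each $x$, a single small scale: choose $C_i$ so that $C_i\supset C_{i_0}$-translates for a fixed small $i_0$; then $d(x,C_i)\le d(x,C_{i_0})\le r_{i_0}$, and since $r_i\to\infty$, $r_{i_0}<\epsilon r_i$ for all large $i$. Thus the genuine construction is: nest the sets, $C_0\subset C_1\subset C_2\subset\cdots$ — this \emph{is} possible because at stage $i+1$ we may start the greedy procedure from the set $C_i$ itself (which is $r_i$-discrete, hence certainly $r_{i+1}$-discrete only if... no, it is not $r_{i+1}$-discrete). I therefore expect the right statement to be that $C_i$'s are not nested but that each is built to be ``$r_i$-discrete and $(r_i/2)$-dense relative to $C_{i-1}$'', and the obstacle I will have to resolve in writing the full proof is exactly pinning down this relative-density bookkeeping so that the telescoping sum of errors is dominated by $\epsilon r_i$ for infinitely many $i$ — which, after thinning to a sufficiently sparse subsequence of the $r_i$, it is.
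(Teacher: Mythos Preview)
Your proposal correctly isolates the central obstacle---maximality only yields $d(x,C_i)\le r_i$, never $d(x,C_i)=o(r_i)$---but none of the fixes you sketch closes the gap. The telescoping estimate at the end fails outright: if each $C_i$ is ``$(r_i/2)$-dense relative to $C_{i-1}$'', chaining gives $d(x,C_i)\le r_0+\tfrac12(r_1+\cdots+r_i)\ge r_i/2$, so the ratio $d(x,C_i)/r_i$ is bounded below by $1/2$ no matter how sparse a subsequence you pass to. Forward nesting $C_0\subset C_1\subset\cdots$ is impossible, as you observe. And the one scheme that would genuinely give small ratios---arranging $\sup_i d(x,C_i)<\infty$ for every $x$, say via reverse nesting $C_0\supset C_1\supset\cdots$---is ruled out for Borel reasons: a pigeonhole argument on the finite ball $B_{M(x)}(x)$ would then show that $\bigcap_i C_i$ meets every orbit, and since it meets each orbit in at most one point (being $r_i$-discrete for all $i$) it would be a Borel transversal, which does not exist for a free ergodic action. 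No purely greedy or inductive construction can manufacture the required coordination across scales.

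The paper's argument uses a genuinely different mechanism. Start with \emph{arbitrary} maximal $r_i$-discrete Borel sets $C_i'$ and, for each $x$, form the rescaled set $D_i(x)=\{g/r_i : (-g)\cdot x\in C_i'\}\subset\R^d$. Each $D_i(x)$ meets a fixed cube, so by compactness the sequence has a nonempty set $D^*(x)$ of accumulation points in $\R^d$. The key observation is that $D^*$ is \emph{orbit-invariant}: passing from $x$ to $h\cdot x$ translates $D_i(x)$ by $h/r_i\to 0$. One then Borel-selects an invariant point $f(x)\in D^*(x)$, takes integer approximations $f_i\approx r_i f$, and sets $C_i=\{f_i(y)\cdot y : y\in C_i'\}$---an orbit-invariant shift of $C_i'$ arranged so that, from the viewpoint of every $x$, the rescaled seeds accumulate at the origin. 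Compactness of a cube in $\R^d$ is what does the essential work, and nothing in your scheme supplies a substitute for it.
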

\begin{proof}
For each $i$, by Theorem~\ref{max_indep},
let $C'_{i}$ be a Borel maximal
$r_i$-discrete set for the graph $G_a$. To each $x \in X$, we associate a sequence $D_{0}(x), D_{1}(x) \ldots$
of subsets of $\R^d$. Let
\[D_{i}(x) = \{g/r_i : g \in \Z^d \land (-g) \cdot x \in C'_i\}\] 
So each set $D_{i}(x)$ is a maximal $1$-discrete subset of $\R^d$, with
respect to metric $\norm{ \cdot}_\infty$ on $\R^d$. Now let
$D^*(x) \subset \R^d$ be the set of accumulation points of the sequence
$D_0(x), D_1(x), \ldots$, so $D^*(x)$ is a closed subset of $\R^d$.
$D^*(x)$ is nonempty since each set $D_{i}(x)$ contains at least one
point in the set $[0,1]^d$ which is compact. Now if $x, y \in X$ are in the
same orbit of $a$, then $D^*(x) = D^*(y)$, since the set $D_{i}(x)$ can
be shifted by a distance of $d(x,y)/r_i$ to become equal to $D_{i}(y)$, and
$d(x,y)/r_i \to 0$ as $i \to \infty$. 

Recall that a function $f$ on $X$ is said to be
\define{$a$-invariant} if $f(x) = f(y)$ for all $x, y$ in the same
$a$-orbit of $X$. 
 We claim that there is an
$a$-invariant Borel function $f \from X \to \R^d$ such that $f(x)
\in D^*(x)$ for every $x \in T^k$. 
Let $f_n(x)$ be the lexicographically
least $g \in \{0, \ldots, r_n - 1\}^d$ such that $D^*(x) \inters (g +
[0,1]^d)/r_n$ is nonempty. Then each function $f_n$ is $a$-invariant, and
by the definition of the lexicographic order, the lexicographically least
element of $D^*(x) \inters [0,1]^d$ is contained in $(f_n(x) + [0,1]^d)/r_n$. 
If we let $f(x) = \lim_{n \to \infty} f_n(x)/r_i$, then $f(x)$ is the
lexicographically least element of $D^*(x) \inters [0,1]^d$, and  
$\norm{f(x) - f_i(x)}_\infty < 1/r_i$ for every $i$. Note that $f$ is
$a$-invariant. 

For every $x \in X$ and $\epsilon > 0$ there are infinitely many $i$
such that $d(f(x),D_i(x)) < \epsilon/2$, since $f(x)$ is an accumulation
point of the $D_i(x)$. Hence, there are infinitely many
$i$ such that $d(f_i(x)/r_i, D_i(x)) < \epsilon /2 + 1/r_i$. Using the
definition of $D_i(x)$, this means there are infinitely many $i$ such that
there exists some $y \in C'_i$ such that 
$d((-f_i(x)) \cdot x, y) < \epsilon r_i / 2 + 1$. 

Now if we define
\[C_{i} = \{f_i(y) \cdot y : y \in C'_i\},\] 
then using the $a$-invariance of $f_i$, we see that there are infinitely
many $i$ such that $d(x,C_i) <
\epsilon r_i/2 + 1$ which suffices to prove the theorem.
\end{proof}

It seems likely that one can prove an analogous result for any finitely
generated nilpotent group $\Gamma$ instead of $\Z^d$ by using the Mal'cev
completion of $\Gamma$ in place of $\R^d$ in the argument above.
However, the analogous problem for arbitrary finitely
generated amenable groups is open. A positive answer would imply that every free
Borel action of a finitely generated amenable group is hyperfinite, which
is a well-known open problem (see \cite{JKL}, \cite{GJ}, and \cite{SS}).

\begin{problem}
  Let $\Gamma$ be a finitely generated amenable group with symmetric
  generating set $S$, and $a \from \Gamma \actson X$ be a free Borel
  action of $\Gamma$ on a standard Borel space $X$. Let $G_{a,S}$ be the
  Borel graph on $X$ where there is an edge between $x$ and $y$ if there
  exists a $\gamma \in S$ such that $\gamma \cdot x = y$. 
  Must it be the case
  that for every increasing sequence $r_0 < r_1 < \ldots$, there is a
  sequence $C_{0}, C_1, \ldots$ of Borel subsets of $X$ such that $C_i$ is
  a Borel maximal $r_i$-discrete set, and for every $\epsilon > 0$ and
  $x \in X$, there are infinitely many $i$ such that $d_{G_{a,S}}(x,C_i) <
  \epsilon r_i$?
\end{problem}

Lemma~\ref{accumulation_lemma} will be used in our proof of
Lemma~\ref{cake} to ensure that the elements of $C$ cover $X$. The next
definition and lemma will be used to ensure that elements of $C$ have
disjoint boundaries. 

\begin{defn}
Suppose $S$ is a set of vertices in a graph $G$ on $X$. Then let $B_{r}(S)
= \{x : d(x,S) \leq r\}$ be the ``ball'' of radius $r$ around $S$. Abusing
notation, if $Y \subset [G]^{< \infty}$, let $B_r(Y) = \{B_r(S) : S \in Y\}$.
Finally, if $Y, Z \subset [G]^{< \infty}$, let 
\[ B_{r}(Y,Z) = \left\{S \union \bigunion \{B_r(R) : R \in Z \land d(R,S)
\leq r \}: S \in Y\right\}.\] 
That is, for each $S \in Y$, $B_{r}(Y,Z)$ contains the set consisting of
$S$ together with the $r$-balls around all elements of $Z$ of distance at
most $r$ from $S$. 
\end{defn}

We have the following triviality

\begin{lemma}\label{enlargement_lemma}
Suppose $Y, Z \subset [G]^{< \infty}$ are such that $\diam(R) \leq r$ for
every $R \in Z$, for all distinct $R,R' \in Z$, $d(R,R') > 2 r$, and
$d(S,S') > 6 r$ for every $S,S' \in Y$. Then 
\begin{enumerate}
\item For every $Q \in B_r(Y,Z)$ there is an $S \in Y$ such that $S \subset
Q \subset B_{3r}(S)$. 
\item Every element of $B_{r}(Y,Z)$ is finite and connected, and the
elements of $B_r(Y,Z)$ are pairwise disjoint. 
\item If $R \in Z$ and $Q \in B_{r}(Y,Z)$, then either $B_{r}(R) \subset
Q$, or $d(R,Q) > r$. 
\end{enumerate} \qed
\end{lemma}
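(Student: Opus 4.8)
The plan is to verify each of the three assertions directly from the definition of $B_r(Y,Z)$, using the three separation hypotheses in turn. Fix $Q \in B_r(Y,Z)$; by definition there is a (unique, as we will see) $S \in Y$ with
\[ Q = S \union \bigunion\{B_r(R) : R \in Z \land d(R,S) \leq r\}. \]
For (1), the inclusion $S \subset Q$ is immediate. For the other inclusion, if $R \in Z$ contributes $B_r(R)$ to $Q$, then $d(R,S) \leq r$, and since $\diam(R) \leq r$ every point of $B_r(R)$ is within $r + \diam(R) + r \leq 3r$ of $S$; hence $B_r(R) \subset B_{3r}(S)$ and so $Q \subset B_{3r}(S)$.

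For (2), finiteness of $Q$ follows since $Y, Z \subset [G]^{<\infty}$ and, by local finiteness of $G$ (which holds for all graphs under consideration, in particular for $G_a$) together with the bound $\diam(R) \le r$, only finitely many $R \in Z$ can satisfy $d(R,S) \le r$ and each contributes a finite ball. Connectedness: $S$ is connected, and each $B_r(R)$ is connected (it is an $r$-ball around the connected set $R$), and each such $B_r(R)$ meets $S$ since $d(R,S) \le r$ forces some vertex of $B_r(R)$ into (or adjacent to, hence within $r$ of, i.e.\ inside) the $r$-neighborhood, and in fact a geodesic witnessing $d(R,S)\le r$ has an endpoint in $S$; so $Q$ is a union of connected sets all meeting the connected set $S$, hence connected. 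Disjointness: suppose $Q_1, Q_2 \in B_r(Y,Z)$ come from $S_1 \neq S_2 \in Y$. By (1), $Q_i \subset B_{3r}(S_i)$, so if $Q_1 \inters Q_2 \neq \emptyset$ then $d(S_1, S_2) \leq 6r$, contradicting the hypothesis $d(S,S') > 6r$. (This also shows the $S$ attached to a given $Q$ is unique, justifying the phrasing in (1).)

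For (3), fix $R \in Z$ and $Q \in B_r(Y,Z)$ arising from $S \in Y$. If $d(R,S) \leq r$, then $B_r(R) \subset Q$ by construction, giving the first alternative. Otherwise $d(R,S) > r$; we must show $d(R,Q) > r$. Any point of $Q \setminus S$ lies in $B_r(R')$ for some $R' \in Z$ with $d(R',S) \le r$, and in particular $R' \ne R$ (since $d(R,S) > r$), so $d(R,R') > 2r$ by hypothesis; hence any $z \in B_r(R')$ has $d(R, z) \ge d(R,R') - r > r$. And points of $S$ have $d(R,\cdot) \ge d(R,S) > r$. Thus $d(R,Q) > r$, completing the proof. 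The only mildly delicate point is the bookkeeping with the separation constants in (2)--(3); everything else is a direct unwinding of definitions, which is why the lemma is labeled a triviality.
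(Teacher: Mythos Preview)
Your proof is correct and matches the paper's approach exactly: the paper labels the lemma a ``triviality'' and gives no proof beyond a \qed, and your direct unwinding of the definition of $B_r(Y,Z)$ is precisely what is intended.

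One small caveat worth flagging: in (2) you assert ``$S$ is connected'' to conclude $Q$ is connected, but connectedness of the elements of $Y$ is not among the stated hypotheses of the lemma (membership in $[G]^{<\infty}$ only says the set lies in a single connected component of $G$, not that it is itself connected). Without this, the connectedness conclusion can fail (take $Z=\emptyset$ and $S$ a two-point set). This is really an omission in the lemma statement rather than in your argument: in every application in the proof of Theorem~\ref{cake} the sets in $Y$ are either balls $B_{r_i/4}(x)$ or, inductively, outputs of $B_r(\cdot,\cdot)$, hence connected. Note also that your parenthetical ``an $r$-ball around the connected set $R$'' is not the right justification for connectedness of $B_r(R)$; the hypothesis $\diam(R)\le r$ (not connectedness of $R$) is what guarantees any two points of $R$ are joined by a geodesic lying inside $B_r(R)$.
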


We now use Lemma~\ref{accumulation_lemma} to prove Theorem~\ref{cake}. 

\begin{proof}[Proof of Theorem~\ref{cake}]
Let $r_i = n 12^{i+1}$, and let $C_0, C_1, \ldots \subset X$ be Borel
maximal $4 r_i$-discrete sets satisfying the conclusion of
Lemma~\ref{accumulation_lemma}.
We now define sets $D_i \subset [G]^{< \infty}$. Given that we have defined
$D_j$ for $j < i$, we define $D_i$ by constructing a sequence $A^i_{0},
\ldots, A^i_i
\subset [G]^{< \infty}$, and letting $D_i = A^i_{i}$ at the end. To begin, let
$A^i_{0} = B_{r_{i}/4}(C_i)$. Hence, by the
definition of $C_i$, the elements of $A^i_{0}$ are connected, have
diameter $\leq r_{i}/2$, and for all distinct $R, R' \in A^i_{0}$ we have
$d(R,R') > 3r_i$.

For $0 < j \leq i$, let
\[A^i_{j} = B_{r_{i-j}}(A^i_{j-1},D_{i-j}).\]
We claim that at the end of this construction, the elements of $D_i = A^i_i$
have diameter $\leq r_i$ and are pairwise of
distance at least $2 r_i$. This is easy to prove by 
induction using Lemma~\ref{enlargement_lemma}, since for 
every $Q \in D_i = A^i_{i}$, there is some $S \in A^i_{0}$ such that 
\[Q \subset B_{3 r_{0}}(\ldots B_{3 r_{i-2}}(B_{3 r_{i-1}}(S)) \ldots )\]
and so $\diam(Q) \leq r_i/2 + 6 r_{i-1} + \ldots + 6 r_{0}$ which is at
most $r_i$ by our definition of $r_i$. 
Next, note that for each
$0 < j \leq i$, if $R \in D_{i-j}$ and $Q \in D_i$, then by induction using
Lemma~\ref{enlargement_lemma}, either $B_{r_{i-j}}(R) \subset Q$, or $d(Q,R) >
r_{i-j} - 3 r_{i - j - 1} - \ldots - 3 r_0$. Note that 
$r_{i-j} - 3 r_{i - j - 1} - \ldots - 3 r_0 \geq r_0$ for
all $0 < j \leq i$. 

To finish,
let $C = \{\tilde{S} : S \in \bigunion_i D_{i}\}$, where $\tilde{S}$ is the
set of vertices in $S$ together with all $x \in
X$ that are in some finite connected component of $G - \boundary S$.
It is clear that $\boundary \tilde{S} \subset \boundary S$, and
that $\boundaryinfty \tilde{S} = \boundary \tilde{S}$ as described in the
paragraph after the statement of Theorem~\ref{cake}. It follows from the
previous paragraph that all distinct $R, S
\in \bigunion_i D_i$ have $\boundary^n R \inters \boundary^n S = \emptyset$,
since $r_0 > 2n$. 
Finally, $\bigunion C = X$ since $\bigunion (\bigunion_i A^i_0) = X$ by our
choice of the sequence $C_i$.

\end{proof}

\end{appendices}

\end{document}